\newtheorem*{rep@theorem}{\rep@title}
\newcommand{\newreptheorem}[2]{%
\newenvironment{rep#1}[1]{%
 \def\rep@title{#2 \ref{##1}}%
 \begin{rep@theorem}}%
 {\end{rep@theorem}}}
\newtheorem{theorem}{Theorem}[section]
\newtheorem{definition}[theorem]{Definition}
\newtheorem{lemma}[theorem]{Lemma}
\newtheorem{corollary}[theorem]{Corollary}
\title{Edge-connectivity between edge-ends of infinite graphs}
\author{Leandro Aurichi$^*$ and Lucas Real\footnote{Institute of Mathematics and Computer Sciences, University of S\~{a}o Paulo, S\~{a}o Paulo, Brazil}}
\date{}
\begin{document}
\maketitle
\begin{abstract}
    In infinite graph theory, the notion of \textit{ends}, first introduced by Freudenthal and Jung for locally finite graphs, plays an important role when generalizing statements from finite graphs to infinite ones. Nash-Williams' Tree-Packing Theorem and MacLane's Planarity Criteria are examples of results that allow a topological approach, in which ends might be considered as \textit{endpoints} of rays. In fact, there are extensive works in the literature showing that classical (vertex-)connectivity theorems for finite graphs can be discussed regarding ends, in a more general context. However, aiming to generalize results of edge-connectivity, this paper recalls the definition of \textit{edge-ends} in infinite graphs due to Hahn, Laviolette and \v{S}irá\v{n}. In terms of that object, we state an edge version of Menger's Theorem (following a previous work of Polat) and generalize the Lovász-Cherkassky Theorem for infinite graphs with edge-ends (inspired by a recent paper of Jacobs, Joó, Knappe, Kurkofka and Melcher).     
\end{abstract}

\section{Introduction}

Most graphs in this paper are \textit{simple}, in the sense that loops and multiple edges are not considered. Exceptionally, graphs that might admit parallel edges are referred as \textit{multigraphs}. If $S$ and $H$ are subgraphs of a given graph $G$, we define the neighborhoods $N(S) = \{v \in V(G)\setminus V(S) : v \text{ has a neighbor in }S\}$ and $N_H(S):= N(S)\cap V(H)$. If $G$ is infinite, we recall that a \textbf{ray} is a one-way infinite path within it. In other words, it is a subgraph of the form $r = v_0v_1v_2\dots$, in which $v_i \in V(G)$ and $v_iv_{i+1}\in E(G)$ for every $i \in \mathbb{N}$. Thus, we say that $r$ \textbf{starts} at $v_0$ and that any infinite connected subgraph of $r$ is its \textbf{tail}. Similarly, a \textbf{double ray} is a subgraph of the form $R = \dots v_{-2}v_{-1}v_0v_1v_2\dots$, in which $\{v_iv_{i+1}\}_{i\in \mathbb{Z}}\subseteq E(G)$. For any $n \in \mathbb{Z}$, the subgraphs $r_+ = v_{n}v_{n+1}v_{n+2}\dots$ and $r_- = v_{n}v_{n-1}v_{n-2}\dots$ are called the \textbf{half-rays} of $R$. Intuitively, a ray describes a direction in the ambient graph $G$, which is formalized by the following equivalence relation: we write $r\sim s$ whenever $r$ and $s$ cannot be separated by finitely many vertices, namely, the tails of $r$ and $s$ belong to the same connected component of $G\setminus S$ for every finite set $S\subset V(G)$. If this is not the case, we say that some finite set $S\subset V(G)$ \textbf{separates} $r$ and $s$.

Equivalently, $r\sim s$ if and only if there is an infinite family of $r-s$ disjoint paths. In this notation, given $A$ and $B$ two vertex subsets of $G$, we say that an $A-B$ \textbf{path} is a finite path $v_0v_1v_2\dots v_n$ with precisely one endpoint in $A$ and the other in $B$, i.e., $v_0\in A$, $v_n \in B$ and $v_1,v_2,\dots,v_{n-1}\in V(G)\setminus (A\cup B)$. Hence, it is easily seen that $\sim$ is an equivalence relation over $\mathcal{R}(G)$, the collection of rays of $G$. The quotient $\frac{\mathcal{R}(G)}{\sim}$, denoted by $\Omega(G)$, is called the \textbf{end space} of $G$. An element $[r]\in \Omega(G)$, in its turn, is the \textbf{end} of the ray $r$.

This notion was first approached by Freudenthal in \cite{freudenthal} and by Hopf in \cite{hopf}, within algebraic discussions related to group theory. Halin in \cite{halin}, however, properly stated this definition for infinite graphs, inspiring generalizations of results from finite graph theory since then. For example, Stein in \cite{stein} showed how ends play an important role when extending the Nash-Williams' Three Packing Theorem for locally finite graphs. Her joint work with Bruhn in \cite{bruhn} presents a similar discussion, in which MacLane's Planarity Criteria is stated for locally finite graphs with their ends. Although this paper works under weaker conditions, local finiteness is a convenient hypothesis when preserving theorems from finite graph theory. As pointed out by Diestel in his survey \cite{survey}, the ends are helpful, and occasionally unavoidable, structures to that aim.

In any case, the results mentioned in the previous paragraph are obtained throughout a topological approach. Indeed, from the definition of $\sim$, the connected components obtained after the removal of finite vertex sets suggest a natural topology for $\hat{V}(G) := V(G)\cup \Omega(G)$. More precisely, given $S\subset V(G)$ finite and $r \in \mathcal{R}(G)$, let $C(S,[r])$ be the set of vertices that belong to the connected component of $G\setminus S$ in which $r$ has a tail. By defining $$\Omega(S,[r]) = \{[s] \in \Omega(G) : S\text{ does not separate }r \text{ and }s\},$$ we declare $S$ and $\hat{V}(S,[r]):= C(S,[r])\cup \Omega(S,[r])$ as basic open sets in $\hat{V}(G)$. Since equivalent rays are infinitely connected, it is easily seen that $\Omega(S,[r])$ is well-defined.

With this topology for $\hat{V}(G)$, the end $[r]$ is precisely the boundary of the ray $r$ (as a set of vertices), formalizing the idea that $r$ converges to $[r]$ or that $[r]$ might be seen as an endpoint of $r$. Then, roughly speaking, in the previously mentioned generalizations of results from finite graph theory, the ends of $G$ play the same role as \textit{vertices}, but ``at the infinity''.

On the other hand, the literature related to the \textit{edge}-connectivity of infinity graphs and their ends is still not broad, with few references on the subject. An exception, for instance, is the paper \cite{jacobs}, in which Jacobs, Joó, Knappe, Kurkofka and Melcher generalized the classical Lovász-Cherkassky Theorem (recalled at the end of the next section) to locally finite graphs with ends. Their proof relies on the version of the same theorem for countable graphs, obtained by Joó in \cite{joo} without mentioning ends. Actually, the Lovász-Cherkassky Theorem is now proven for arbitrary infinite graphs in \cite{joogeral}, where Joó extended his previous work. In this paper, we will revisit all these results by considering infinite graphs and their edge-ends. As a conclusion, at the end of Section \ref{sec-paths} we establish the following:

\begin{theorem}[Lovász-Cherkassky for infinite graphs with edge-ends]\label{t1}
	Let $G = (V,E)$ be any infinite graph and fix a discrete subspace $T\subset V(G)\cup \Omega_E(G)$ of $\hat{E}(G)$. Suppose that $|\delta(X)|$ is even or infinite for every $X\subset V(G)$ in whose closure $T$ lies. Then, there is a collection $\mathcal{P}$ of edge-disjoint $T-$paths such that, for every $t\in T$, there is a cut separating $t$ from $T\setminus \{t\}$ that lies on the family $\mathcal{P}_t = \{P\in \mathcal{P}: t \text{ is an endpoint of }P\}$.
\end{theorem}

Especially regarding connecting paths, the definitions and notations employed in the above statement are presented throughout the next sections. However, we remark that the notion of edge-ends introduced by Theorem \ref{t1} is inspired by the definitions of ends, but slightly modified so that edge-connectivity results can be approached more properly. Thus, given rays $r,s\in \mathcal{R}(G)$, we now say that $r$ and $s$ are \textbf{edge-equivalent}, writing $r\sim_Es$, if, and only if, the tails of $r$ and $s$ belong to the same connected component of $G\setminus F$, for every finite set of \textit{edges} $F\subset E(G)$. Equivalently, there is an infinite family of edge-disjoint paths, with distinct endpoints, connecting $r$ and $s$. If this is not the case, we say that a finite set $F\subset E(G)$ \textbf{separates} $r$ and $s$. When $F$ can be described by $\{uv \in E(G): u \in X, v \in V(G)\setminus X\}$ for some $X\subset V(G)$, we write $F = \delta(X)$ and call $F$ the \textbf{cut} corresponding to the vertex set $X$.

As it is easily seen, $\sim_E$ is also an equivalence relation over $\mathcal{R}(G)$, so that the quotient $\frac{\mathcal{R}(G)}{\sim_E}$ defines the \textbf{edge-end space} of $G$, denoted by $\Omega_E(G)$. Similarly, we denote the equivalence class of ray $r$ by $[r]_E$ and we call it an \textbf{edge-end} of the graph. We even observe that $\sim$ and $\sim_E$ are the same relation if $G$ is locally finite, because, in this case, rays which are separated by finitely many vertices can be separated by the finitely many edges incident to them. In particular, the main result of \cite{jacobs} will be preserved by Theorem \ref{t1}. When there are vertices of infinite degree, however, $\sim_E$ may identify more rays, as exemplified by Figure \ref{raioduplodominado}.

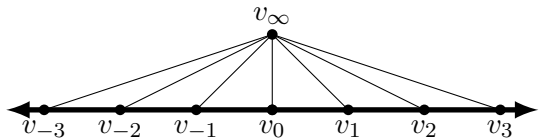
\begin{figure}[ht]
	\centering
	\begin{tikzpicture}
		% Double ray
		\draw[line width=2pt, black, -latex] (0,0) -- (3.5,0);
		\draw[line width=2pt, black, -latex] (0,0) -- (-3.5,0);
		
		% Vertices
		\foreach \x in {-3,-2,-1,0,1,2,3}
		\fill (\x,0) circle (2pt) node[below]{$v_{\x}$};
		
		\fill (0,1) circle (2pt) node[above]{$v_{\infty}$};
		
		\foreach \x in {-3,-2,-1,0,1,2,3}
		\draw (\x,0) -- (0,1);
		
		% Origin point
		\fill (0,0) circle (2pt);
	\end{tikzpicture}
	
	\caption{This graph has two ends, corresponding to the rays $v_0v_1v_2\dots$ and $v_0v_{-1}v_{-2}\dots$, for example. However, it has only one edge-end.}
	\label{raioduplodominado}
\end{figure}

In the literature, the definition of $\sim_E$ was first presented by the paper \cite{edgeends} of Hahn, Laviolette and \v{S}irá\v{n}. Besides this article, the discussions carried out by Georgakopoulos in \cite{georgakopoulos} comprise one of the few other works which approach the notion of edge-ends. In fact, since $\sim$ and $\sim_E$ are the same equivalence relation over locally finite graphs, most references in infinite graph theory may not mention the latter one in their studies.

Despite that, we observe that Theorem \ref{t1} also requires topological conditions on $V(G)\cup \Omega_E(G)$. Indeed, once presented the topology for $\hat{V}(G)$, the relation $\sim_E$ suggests a topology for $\hat{E}(G) = V(G)\cup \Omega_E(G)$ as well. Now, given a finite $F\subset E(G)$ and an edge-end $[r]_E\in \Omega_E(G)$, we denote by $C_E(F,[r]_E)$ the (vertices of the) connected component of $G\setminus F$ in which $r$ has a tail. Analogously, if $v\in V(G)$ is a vertex, $C_E(F,v)$ denotes the vertex set of the connected component in $G\setminus F$ containing it. As a concept handled by Theorem \ref{t1} and that will be often used throughout the paper, we now say that $F$ \textbf{separates} two subsets $A,B\subseteq \hat{E}(G)$ if $C_{E}(F,x)\neq C_{E}(F,y)$ for every $x\in A$ and $y\in B$. In its turn, since $C_E(F,[r]_E) = C_E(F,[r']_E)$ if $r$ and $r'$ are edge-equivalent rays, the following set is also well-defined: $$\Omega_E(F,[r]_E) = \{[s]_E \in \Omega_E(G) : F \text{ does not separate }r\text{ and }s\}.$$

Then, by denoting $\hat{E}(F,[r]_E) := C_E(F,[r]_E)\cup \Omega_E(F,[r]_E)$, the aimed topology for $\hat{E}(G)$ has $\{\{v\} : v \in V(G)\}\cup \{\hat{E}(F,[r]_E): F\subset E(G)\text{ finite}, [r]_E\in \Omega_E(G)\}$ as a basis. In particular, the subspace $\Omega_E(G)\subset \hat{E}(G)$ has $\{\Omega_E(F,[r]_E) : [r]_E\in \Omega_E(G), F\subset E(G)\text{ finite}\}$ as a basis. Thus, also considering $\Omega(G)\subset \hat{V}(G)$ with the corresponding inherited structure, a comparison between the classes of topological spaces $\Omega:=\{\Omega(G) : G \text{ graph}\}$ and $\Omega_E :=\{\Omega_E(G) : G \text{ graph}\}$ can be found in \cite{compaulo}. Among other studies in that paper, written in a joint work with Paulo Magalhães Júnior, we discuss how $\Omega_E$ is a proper subfamily of $\Omega$.

Finally, we revisit the work of Polat in \cite{polat} in order to illustrate how the definition of $\sim_E$ is appropriate when studying edge-connectivity properties. More precisely, throughout the next sections we prove the following Mengerian result:

\begin{theorem}[Menger's Theorem for edge-ends]\label{t2}
	Let $A,B\subset \Omega_E(G)$ be sets of edge-ends such that $\overline{A}\cap B = \emptyset = A\cap \overline{B}$. Then, there is a family $\mathcal{P}$ of edge-disjoint $A-B$ paths and an $A-B$ separator $F\subseteq E(G)$ that lies on it.
\end{theorem}

\section{Suitable connecting paths}
\label{sec-paths}
In this section, we formalize the definitions of $T-$paths and $A-B$ paths employed by Theorems \ref{t1} and \ref{t2} respectively, besides also proving a first common instance of these statements. To this aim, we first recall that Menger's classical theorem for finite graphs is a duality result regarding connectivity. It claims that the minimum amount of vertices in a graph $G$ that separate two subsets of $V(G)$ is attained by the maximum size of a family of disjoint paths connecting them. In this case, each path of this family must contain precisely one vertex from the minimum separator. It was conjectured by Erd\H{o}s that this property is preserved for infinite graphs, which was only verified by Aharoni and Berger in \cite{erdosmenger} after more than thirty years:

\begin{theorem}[\cite{erdosmenger}, Erd\H{o}s-Menger Theorem]\label{erdosmenger}
	Let $G$ be a graph and fix $A,B\subset V(G)$. Then, there exists a family $\mathcal{P}$ of disjoint $A-B$ paths and  an $A-B$ separator $S\subseteq V(G)$ that lies on it.
\end{theorem}

In the above formulation, an $A-B$ \textbf{separator} is a vertex set $S\subset V(G)$ such that there is no $A-B$ path in $G\setminus S$. When saying that $S$ \textbf{lies on} a family $\mathcal{P}$ of disjoint paths, we mean that $S$ is obtained by the choice of precisely one vertex from each element of $\mathcal{P}$.

However, we can rephrase Theorem \ref{erdosmenger} to consider $\mathcal{P}$ as a family of \textit{edge-disjoint} $A-B$ paths, rather than analyzing just disjoint ones. In this case, the minimum separator described by the statement must be a set of edges. Then, we say that an edge-set $F$ in a graph $G$ \textbf{lies on} a family of edge-disjoint paths $\mathcal{P}$ if $F$ is obtained by the choice of precisely one edge from each element of $\mathcal{P}$. In these terms, the result below is folklore, but its proof is presented here for the sake of completeness. Roughly speaking, it is obtained after blowing up vertices to complete graphs and applying the Erd\H{o}s-Menger Theorem:

\begin{corollary}[Erd\H{o}s-Menger Theorem for edges]\label{edgesmenger}
	Let $G$ be a graph and fix disjoint subsets $A,B\subset V(G)$. Then, there exist a family $\mathcal{P}$ of edge-disjoint $A-B$ paths and a cut $\delta(X)$ lying on it such that $A\subset X$ and $B\subset V(G)\setminus X$.  
\end{corollary}
\begin{proof}
	We will define an auxiliary graph $\tilde{G}$. For every $v\in V(G)$, let $K_v$ be a complete graph on $d(v)$ vertices. Then, the vertex set of $\tilde{G}$ will be the disjoint union $\displaystyle \bigcup_{v\in V(G)}V(K_v)$. For every edge $uv \in E(G)$, we define an edge $u'v'$ between the cliques $K_u$ and $K_v$, referred as an \textit{old edge}. Since $|K_v| = d(v)$, we can assume that every vertex of $K_v$ is an endpoint of at most one old edge.

	Apply Theorem \ref{erdosmenger} in order to separate the (disjoint) vertex sets $\tilde{A} = \displaystyle \bigcup_{v \in A}V(K_v)$ and $\tilde{B} = \displaystyle \bigcup_{u\in B}V(K_u)$, fixing the family $\tilde{\mathcal{P}}$ of disjoint $\tilde{A}-\tilde{B}$ paths and the separator $\tilde{S}$ lying on it. For every $\tilde{P}\in \tilde{\mathcal{P}}$ and every $v\in V(G)$, we can assume that $|\tilde{P}\cap V(K_v)| \leq 2$. In fact, if $\tilde{v}_1,\tilde{v}_2 \in V(K_v)$ are non-adjacent vertices in $\tilde{P}$, consider the path $\tilde{P}'$ obtained from $\tilde{P}$ after replacing the subpath connecting $\tilde{v}_1-\tilde{v}_2$ by the edge $\tilde{v}_1\tilde{v}_2$. Being an $\tilde{A}-\tilde{B}$ path, $\tilde{P}'$ must meet $\tilde{S}$ in the vertex of $\tilde{P}\cap \tilde{S}$, since $\tilde{\mathcal{P}}$ is composed by disjoint paths. Therefore, $(\tilde{\mathcal{P}}\setminus \{\tilde{P}\})\cup \{\tilde{P}'\}$ is also a family of disjoint $\tilde{A}-\tilde{B}$ paths on which $\tilde{S}$ lies.

	By considering that $|\tilde{P}\cap V(K_v)| \leq 2$ for every $v\in V(G)$, a path $P$ in $G$ arises from $\tilde{P}\in \tilde{\mathcal{P}}$ after contracting the cliques $\{K_v : v \in V(G)\}$ to their original vertices. Then, $\mathcal{P} = \{P: \tilde{P} \in \tilde{\mathcal{P}}\}$ is a family of edge-disjoint $A-B$ paths. Moreover, each vertex $\tilde{v}\in \tilde{S}$ belongs to a clique of the form $K_v$ and it is the endpoint of a unique old edge $\theta(\tilde{v})$, originally incident in $v\in V(G)$. Note also that $\theta(\tilde{v})$ belongs to the path of $\tilde{\mathcal{P}}$ that contains $\tilde{v}$.

	We observe that every $A-B$ path $Q$ in $G$ must pass through an edge from $\{\theta(\tilde{v}): \tilde{v}\in \tilde{S}\}$. Otherwise, a minimal path in $\tilde{G}$ containing all the old edges of $Q$ will not intersect $\tilde{S}$, contradicting the fact that $\tilde{S}$ separates $\tilde{A}$ and $\tilde{B}$. Therefore, $F = \{\theta(\tilde{v}) : \tilde{v}\in \tilde{S}\}$ is an edge set lying on $\mathcal{P}$ for which there is no $A-B$ path in $G\setminus F$. Although it is no difficult to see that $F = \delta(X)$ for some $X\subset V(G)$ such that $A\subset X$ and $B\subset V(G)\setminus X$, this will later follow by Lemma \ref{cuts}.
\end{proof}

In addition, there also exist statements for Mengerian theorems when considering infinite graphs and their ends, which we shall revisit in order to conclude both Theorems \ref{t1} and \ref{t2}. To that aim, the notions of $A-B$ paths and $A-B$ separators need to be better discussed when $A$ and $B$ are not only vertex subsets, but might contain ends. Nevertheless, when dealing with the topological space $\hat{V}(G)$, different references in the literature present distinct definitions for connecting paths between ends and the corresponding separators, although all the formulations are coincident for locally finite graphs.

Before comparing these concepts, it is useful to distinguish the role played by some vertices of infinite degree. More precisely, fixed a graph $G$, we say that $v\in V(G)$ \textbf{dominates} a ray $r \in \mathcal{R}(G)$ if it is infinitely connected to $r$, in the following sense: for every finite set $S\subset V(G)\setminus \{v\} $, a tail of $r$ and the vertex $v$ belong to the same connected component of $G\setminus S$, i.e., $v\in C(S,[r])$. Equivalently, there is an infinite family of paths connecting $v$ and $r$, pairwise intersecting precisely at $v$. In this case, $v$ dominates any other ray equivalent to $r$, allowing us to say that $v$ dominates the \textit{end} $[r]$. Then, combining notations from \cite{polat} and \cite{jacobs}, we set the following definitions:

\begin{definition}[Connecting paths]\label{paths}
	Let $G$ be a graph. Depending on whether we are considering connectivities of the form ``vertex-vertex'', ``vertex-end'' and ``end-end'', we define the \textbf{paths} and the \textbf{graphic paths} as follows:
	\begin{itemize}
		\item \texttt{Connectivity between vertices:} For vertices $u,v \in V(G)$, a $u-v$ path is any finite path in $G$ which has $u$ and $v$ as endpoints;
		\item \texttt{Connectivity between vertices and ends:} Fix $v\in V(G)$ and $\omega \in \Omega(G)$. A \textbf{graphic} $v-\omega$ \textbf{path} is a ray starting at $v$ whose end is $\omega$. A $v-\omega$ \textbf{path} is either a graphic $v-\omega$ path or a $v-u$ path, in which $u\in V(G)$ is a vertex that dominates $\omega$;
		\item \texttt{Connectivity between ends:} Fix $\omega_1,\omega_2 \in \Omega(G)$. A \textbf{graphic} $\omega_1-\omega_2$ \textbf{path} is a double ray in which one half-ray has end $\omega_1$ and the other has end $\omega_2$. In its turn, a $\omega_1-\omega_2$ \textbf{path} is either a graphic $\omega_1-\omega_2$ path or a $v-\omega_i$ path, for some $i\in \{1,2\}$ and a vertex $v\in V(G)$ that dominates $\omega_{3-i}$.
	\end{itemize}
	More generally, for given sets $A,B\subset \hat{V}(G)$, a (graphic) $A-B$ \textbf{path} is any (graphic) $a-b$ path for some $a\in A$ and some $b\in B$. Additionally, only when $A, B\subset V(G)$, we impose that any $A-B$ path intersects $A\cup B$ precisely at their endpoints, which recovers the definition presented in the introduction. 
\end{definition}

Roughly speaking, the paths in Definition \ref{paths} differ from the graphic ones by allowing dominating vertices to represent some reachable end. If we consider only graphic paths, Bruhn, Diestel and Stein in \cite{versaodiestel} generalized Theorem \ref{erdosmenger} somehow verbatim, under a condition of topological separation:

\begin{theorem}[\cite{versaodiestel}, Theorem 1.1]\label{versaodiestel}
	In a given connected graph $G$, fix $A,B\subset \hat{V}(G)$ two sets that are separated topologically, i.e., such that $A\cap \overline{B} = \emptyset = \overline{A}\cap B$. Then, there exist a family $\mathcal{P}$ of disjoint graphic $A-B$ paths and a subset $S\subset \hat{V}(G)$ with the following properties:
	\begin{itemize}
		\item[$i)$] The graphic paths intersect $A$ and $B$ precisely at their endpoints. Formally, for every $P\in \mathcal{P}$ there are $a \in A$ and $b\in B$ such that $\overline{P} \cap (A\cup B) = \{a,b\}$;
		\item[$ii)$] Any two distinct graphic paths $P,Q\in\mathcal{P}$ are disjoint even at the infinity. In other words, $\overline{P}\cap \overline{Q} = \emptyset$;
		\item[$iii)$] $|\overline{P}\cap S| = 1$ for every $P\in \mathcal{P}$ and $S = \displaystyle \bigcup_{P\in\mathcal{P}}S\cap \overline{P}$;
		\item[$iv)$] If $Q$ is any graphic $A-B$ path, then $\overline{Q}\cap S \neq \emptyset$.
	\end{itemize}
\end{theorem}

Comparing Theorem \ref{versaodiestel} with the original statement of the Erd\H{o}s-Menger Theorem, the subset $S\subset \hat{V}(G)$ of the above thesis plays the role of an ``$A-B$ separator lying on the family $\mathcal{P}$''. Nevertheless, when applied to the graph $G$ of Figure \ref{raioduplodominado}, there are no two disjoint graphic paths connecting the only two ends $\omega_1,\omega_2\in \Omega(G)$. But, since $v_{\infty}$ is adjacent to every other vertex, at least two \textit{vertices} are needed to separate $\omega_1$ and $\omega_2$. The ``$\{\omega_1\}-\{\omega_2\}$ separator'' claimed by Theorem \ref{versaodiestel}, then, is given by either $\{\omega_1\}$ or $\{\omega_2\}$. However, if we aim to forbid ends in separating sets, the broader definition of connecting paths in Definition \ref{paths} - rather than only the graphic ones - is useful. Relying on this notion, Polat in \cite{polat} obtained the Mengerian result below:

\begin{theorem}[\cite{polat}, Theorem 3.4\footnote{When consulting the original statement in \cite{polat}, the reader may be aware that $A$ and $B$ are indeed end subsets of $G$, although a typo in Theorem 3.4 led to a notation for $\Omega(G)$ which disagrees with the one adopted in the whole paper.  }]\label{mengerpolat}
	Let $A,B\subset \Omega(G)$ be two sets of ends such that $\overline{A}\cap B = \emptyset  = A\cap \overline{B}$. Consider a family $\mathcal{P}$ of maximum size of disjoint $A-B$ paths and fix an $A-B$ separator $S$ of minimum size. Then, $|\mathcal{P}| = |S|$. 
\end{theorem}

Although we shall precise Polat's notion of ``$A-B$ separators'' only in the next section, we remark that Theorem \ref{mengerpolat} will be further stated in an edge-analogous setting as a consequence of our Theorem \ref{t2}. To this aim, however, the definitions employed in the above result need to be rewritten in their edge-related versions as well. Hence, inheriting the notion of $\mathcal{E}-$dominance from \cite{edgeends}, we say that a vertex $v\in V(G)$ of a graph $G$ \textbf{edge-dominates} an edge-end $\omega \in \Omega_E(G)$ if $v\in C_E(F,\omega)$ for every finite set $F\subset E(G)$. In this case, there is an infinite family of edge-disjoint paths connecting $v$ to infinitely many vertices of $r$. Then, we fix the following definitions:

\begin{definition}[Connecting paths - edge version]\label{epaths}
	Let $G$ be a graph. For vertex sets $A,B\subset V(G)$, the notion of an $A-B$ path given by Definition \ref{paths} is preserved. For connectivities between edge-ends and vertices and edge-ends, we consider the following criteria:
	\begin{itemize}
		\item \texttt{Connectivity between vertices and edge-ends:} Fix $v\in V(G)$ and $\omega \in \Omega_E(G)$. A \textbf{graphic} $v-\omega$ \textbf{path} is a ray starting at $v$ whose edge-end is $\omega$. Then, a $v-\omega$ \textbf{path} is either a graphic one or a $\{v\}-\{u\}$ path for some vertex $u\in V(G)$ that edge-dominates $\omega$;
		\item \texttt{Connectivity between edge-ends:} For edge-ends $\omega_1,\omega_2\in \Omega_E(G)$, an $\omega_1-\omega_2$ \textbf{path} is one of objects below:
		\begin{itemize}
			\item[$i)$] A $v-\omega_i$ path, for some $i\in \{1,2\}$ and some vertex $v\in V(G)$ that edge-dominates $\omega_{3-i}$; 
			\item[$ii)$] A double ray in which one half-ray has edge-end $\omega_1$ and the other has edge-end $\omega_2$. This case defines a \textbf{graphic} $\omega_1-\omega_2$ \textbf{path}.
		\end{itemize}
	\end{itemize}
	Finally, given $A,B\subset \hat{E}(G)$ such that $(A\cup B)\cap \Omega_E(G) \neq \emptyset$, an $A-B$ \textbf{path} is simply an $a-b$ path for some $a \in A$ and some $b\in B$.  
\end{definition}

In order to conclude a restricted version of Theorem \ref{t1}, which will be later useful for proving Theorem \ref{t2}, we turn our attention to subsets of $\hat{E}(G)$ that can be separated by finitely many edges. While discussing Menger-type results, we are particularly interested when these separators are small ones, often minimal or with minimum size. Then, in future arguments, we shall rely on the following remark:   

\begin{lemma}\label{cuts}
	Consider sets $A$ and $B$ such that either $A\subset V(G)$ or $A\subset \Omega_E(G)$, as well as either $B\subset V(G)$ or $B\subset \Omega_E(G)$. If it exists, let $F\subset E(G)$ be a $\subseteq-$minimal finite edge set that separates $A$ and $B$. Then, there is $X\subset V(G)$ such that $F = \delta(X)$. Besides that, $X$ can be chosen so that $A\cap \overline{C} \neq \emptyset$ for every connected component $C$ of $G[X]$.   
\end{lemma}
\begin{proof}
	Considering the topology of $\hat{E}(G)$, define the family $$\mathcal{C} = \{C: C \text{ is a connected component of }G\setminus F \text{ such that }A\cap \overline{C} \neq \emptyset\}.$$ 
	We will show that the claimed subset of $V(G)$ can be chosen as $X = \displaystyle \bigcup_{C\in \mathcal{C}}V(C)$. In fact, for every edge $e\in F$, there is an $A-B$ path $P$ in $G\setminus (F\setminus \{e\})$ by the minimality of $F$. This path contains $e$ because $F$ separates $A$ and $B$ by assumption, although $P$ contains no other edge from $F$. Therefore, $e$ has an endpoint in $X$ and the other in $V(G)\setminus X$, so that $e\in \delta(X)$.

	Conversely, by definition of $\mathcal{C}$, an edge $e\in \delta(X)$ has endpoints in two distinct connected components of $G\setminus F$. Hence, we must have $e\in F$.
\end{proof}

However, by possibly containing infinitely many edges, the precise definition of an $A-B$ separator as mentioned by Theorem \ref{t2} will be better introduced in Section \ref{s3}. From now until the end of the current one, we shall rather conclude the instance below and draw some related consequences. Roughly speaking, its proof follows from an iterative application of Corollary \ref{edgesmenger}, the Erd\H{o}s-Menger Theorem for edges. In the literature, its vertex-analogous statement is the Theorem 4.7 found in \cite{PolatFrances}, from where the details below are adapted: 

\begin{lemma}\label{vertex-end}
	Let $G$ be a connected graph. Fix an edge-end set $A\subset \Omega_E(G)$ and a vertex set $S\subseteq V(G)$. Assume the existence of a finite edge set $F\subset E(G)$ that separates $A$ and $S$, which we consider to have minimum size with that property. Then, there is a family $\mathcal{P}$ of $|F|$ many edge-disjoint $S-A$ paths.
\end{lemma}
\begin{proof}[Revisited proof of Theorem 4.7 in \cite{PolatFrances}]
	We first consider the case when $A$ is closed in $\Omega_E(G)$. Then, we will construct the required family $\mathcal{P}$ recursively, as the limit object from a sequence $(\mathcal{P}_n)_{n \in \mathbb{N}} = \{P_1^n,P_2^n,\dots,P_k^n\}_{n\in\mathbb{N}}$ of families of edge-disjoint finite paths. In this notation, $k := |F|$ and $P_i^{n+1}$ shall contain $P_i^n$ as an initial subpath, for each $n\in\mathbb{N}$ and $1 \leq i \leq k$.

	In order to initialize this construction, we rely on Lemma \ref{cuts} to write $F = \delta(V_0)$ for some $V_0\subset V(G)$, since $F$ is $\subseteq-$minimal regarding a separation property. By denoting $V_0' = V(G)\setminus V_0$, we assume that $S\subset V_0'$ and $A\subset \overline{V_0}$.

	Now, let $S'\subseteq V(G)$ comprise the endpoints of the edges in $F$ that belong to $V_0$. Denote by $\hat{G}$ the subgraph of $G$ obtained after adjoining to $G[V_0']$ precisely $S'$ and the edges from $F$. Hence, by the minimality of $F$, Corollary \ref{edgesmenger} (the Erd\H{o}s-Menger Theorem for edges) guarantees the existence in $\hat{G}$ of a family $\mathcal{P}$ of $k$ edge-disjoint $S-S'$ paths. In particular, each edge from $F$ belongs to precisely one path of $\mathcal{P}$.
	
	After setting $G_0 := G$, $S_0:= S$, $\mathcal{P}_0:=\mathcal{P}$ and $F_0:= F$, suppose by induction that we have defined the following objects and their properties:
	\begin{itemize}
		\item The family of paths $\mathcal{P}_n = \{P_i^n: 1 \leq i \leq k\}$. For each $1 \leq i \leq k$, denote by $v_i^n$ the endpoint of $P_i^n$ other than the one in  $S$;
		\item A subgraph $G_n$ of $G$ such that the cut $\delta(V(G_n))$ in $G$ is finite. Moreover, we assume that every edge-end of $A$ has a representative in $G_n$;
		\item A sizewise minimum cut $F_n$ in $G_n$ which separates a specified vertex set $S_n\subseteq V(G_n)$ from $A$. We also assume that the last edge from each path of $\mathcal{P}_n$ is an element of $F_n$. 
	\end{itemize}

	First, since $F_n$ is a cut of $G_n$, let us write $F_n = \delta(V_n)$ and $V_n':=V(G_n)\setminus V_n$ for some $V_n\subset V(G_n)$. We assume that $G_{n+1} := G_n[V_n]$ is the subgraph induced by the part of the bipartition $\{V_n, V_n'\}$ in which every end of $A$ has a representative.  Then, define $$S_{n+1} = \{v_j^n : \text{ finitely many edges separate }v_j^n\text{ from }A\text{ in }G_{n+1}\}.$$

	For every $1 \leq i \leq k$ such that $v_i^n \notin S_{n+1}$, we set $P_i^{n+1} = P_i^n$. Since $S_{n+1}$ is finite, its definition allows us to find a finite set $F_{n+1}\subset E(G_{n+1})$ that separates $S_{n+1}$ from $A$. By choosing $F_{n+1}$ of minimum size with that property, $F_{n+1}$ is a cut in $G_{n+1}$ by Lemma \ref{cuts}. Hence, we write  $F_{n+1} = \delta(V_{n+1})$ for some $V_{n+1}\subset V(G_{n+1})$, considering $V_{n+1}' := V(G_{n+1})\setminus V_{n+1}$ as the part of the bipartition $\{V_{n+1}, V_{n+1}'\}$ containing $S_{n+1}$.

	Denote by $K_n \subset F_n$ the set of edges of $F_n$ whose endpoints belong to $S_{n+1}$. Let $A'$ be the set of endpoints in $V_{n}'$ for the edges of $F_n$ and, similarly, let $B'$ be the set of endpoints in $V_{n+1}$ for the edges of $F_{n+1}$. Define $\hat{G}_{n}$ as the graph obtained after adding to $G_{n+1}[V_{n+1}']$ the vertices of $A'\cup B'$ and the edges of $K_n\cup F_{n+1}$. By Corollary \ref{edgesmenger} (the edge version of the Erd\H{o}s-Menger Theorem), there is a family $\mathcal{P}'$ of edge-disjoint $A'-B'$ paths in $\hat{G}_n$ together with a cut $C'$ obtained by the choice of precisely one edge from each path of $\mathcal{P}'$. We observe that $|C'|\geq |K_n|$. Otherwise, $F_n' = (F_n\setminus K_n)\cup C'$ has strictly fewer edges than $F_n$ and separates $A$ from $S_n$ in $G_n$, contradicting the definition of $F_n$. Therefore, each edge from $K_n$ lies on precisely one path of $\mathcal{P}'$. By concatenating these paths with the previous paths of $\mathcal{P}_n$ that end in $K_n \subset F_n$, we finish the definition of $\mathcal{P}_{n+1}$.

	At the end of this recursive process, consider $P_i' = \displaystyle \bigcup_{n\in\mathbb{N}}P_i^n$ for each $1 \leq i \leq k$, which is well-defined since, for every $n\in\mathbb{N}$, the path $P_i^{n+1}$ is constructed to contain $P_i^n$ as an initial segment. We observe that, if $P_i'$ is a ray, its edge-end belongs to $A$, so that $P_i'$ is an $S-A$ path. Otherwise, since $A$ is closed by assumption, there would be a finite set $F'\subset E(G)$ such that $A\cap \Omega_E(F',[P_i']_E) = \emptyset$. However, we can argue that $\displaystyle \bigcap_{n\in\mathbb{N}}E(G_n) = \emptyset$, proving that $F'\cap E(G_n) =\emptyset$ for some big enough $n$. In fact, suppose for a while that there exists an edge $e\in E(G)$ contained in $G_n$ for every $n\in\mathbb{N}$. Since $G$ is connected, there would be a (finite) path containing both $e$ and some edge from $F_0$. Nevertheless, this path should intersect $F_n$ for every $n\geq 1$, because $e\in E(G_n)$ and $F_0\subseteq E(G)\setminus E(G_{n})$. This reaches a contradiction once $\{F_n\}_{n\in\mathbb{N}}$ is a family of pairwise disjoint edge sets by construction, while the path just claimed is finite. Therefore, if $n\in\mathbb{N}$ is chosen so that $F'\cap E(G_n) = \emptyset$, then $C_E(F',[P_i']_E)$ contains the connected component of $G_n$ in which there is a tail of $P_i'$. This connected component also contains representatives of edge-ends in $A$ by Lemma \ref{cuts}, now contradicting the fact that $A\cap \Omega_E(F',[P_i']_E) = \emptyset$. Hence, $P_i'$ is indeed an $S-A$ path.

	Similarly, if $P_i'$ is a finite path for some $1 \leq i \leq k$, then $P_i' = P_i^n$ for all but finitely many numbers $n \in \mathbb{N}$. Moreover, by definition, $P_i'$ is also an $S-A$ path if, and only if, its endpoint $v_i$ edge-dominates an edge-end from $A$. Then, consider the set of indices $I = \{1 \leq i \leq k : P_i'\text{ is not a }S-A\text{ path}\}$. Let $J\subset I$ be a subset of maximum size for which there is a family $\mathcal{P}_J = \{P_j : j \in J\}$ of edge-disjoint $S-A$ paths satisfying the following properties:
	
	\begin{itemize}
		\item $\{P_i': i\notin J\}\cup \{P_j : j \in J\}$ is a set of edge-disjoint rays or finite paths;
		\item $P_j'$ is an initial subpath of $P_j$ for every $j\in J$.
	\end{itemize}

	If we prove that $J = I$, then $\mathcal{P} = \{P_i' : i \notin I\}\cup \{P_i : i \in I\}$ is the claimed family of edge-disjoint $S-A$ paths. Indeed, suppose that there exists $\ell\in I\setminus J$. Then, $P_{\ell}'$ is a finite path whose endpoint $v_{\ell}$ (other than the one in $S$) does not edge-dominate an end from $A$. In particular, $v_{\ell}$ does not edge-dominate an edge-end from $\{[P_i']_E : P_i' \text{ is a ray}, i\notin I\} \cup \{[P_j]_E : P_j\text{ is a ray}, j \in J\}$. Hence, there is a finite edge set $L\subseteq E(G)$ such that $C_E(L,v_{\ell})$ contains no tail from a ray in $\{P_i':i\notin J\}\cup \{P_j: j \in J\}$. This means that $C$ may contain edges from the finite paths in $\{P_i':i\notin J\}\cup \{P_j: j \in J\}$, but it contains only finitely many edges from the elements of this family which are rays. In other words, by possibly adding finitely many edges to $L$, we can assume that $E(C)$ actually does not intersect $\displaystyle \bigcup_{i\notin J}E(P_i')\cup \bigcup_{j\in J}E(P_j)$. However, by construction, $v_{\ell}$ cannot be separated from $A$ by finitely many edges, since $P_{\ell}'$ is a finite path. Hence, there is a ray in $C$ starting at $v_{\ell}'$ whose edge-end belongs to $A$. Concatenating this ray with the finite path $P_{\ell}'$, we contradict the maximality of $J$.

	Now, suppose that $A\subset \Omega_E(G)$ is any subset that can be separated from $S$ by the finite set $F\subset E(G)$. If $F$ has minimum size with that property, we will now finish the proof by reducing Lemma \ref{vertex-end} to the case just analyzed, in which $A$ was assumed to be closed. In that regard, we first observe that $F$ also separates $\overline{A}$ from $S$. Otherwise, suppose that $r$ is a graphic $S-\overline{A}$ path missing $F$. Since $[r]_E\in \overline{A}$, fix $\omega \in A\cap \Omega_E(F,[r]_E)$ and let $r'$ be a representative of this edge-end which starts in some vertex in $S$. Once $C_E(F,[r']_E) = C_E(F,[r]_E)$ by the choice of $\omega$, the tail of $r'$ in $G\setminus F$ is thus contained in $C_E(F,[r]_E)$. In other words, this tail of $r'$ lies in a connected component of $G\setminus F$ which contains a vertex of $S$, since $r$ is a $S-\overline{A}$ path that does not intersect $F$. However, this contradicts the choice of $F$ as an edge set that separates $S$ and $A$, because $[r']_E=\omega \in A$.

	Hence, by the case just analyzed, there is a family $\mathcal{P}$ of $k = |F|$ edge-disjoint $S-\overline{A}$ paths. Then, for every $P\in \mathcal{P}$ there is an edge-end $\omega_P\in \overline{A}$ such that $P$ is either a ray whose edge-end is $\omega_P$ or a finite path such that an endpoint edge-dominates $\omega_P$. In this latter case, by shortening the path $P$ if necessary, we assume that its endpoint is the unique vertex of $P$ that edge-dominates the edge-end $\omega_P$.

	Consider then the set $X = \{P\in \mathcal{P}:\omega_P \in \overline{A}\setminus A \}$. Suppose that $\mathcal{P}$ is chosen so that $X$ has minimum size. Finishing the proof, we claim that $X = \emptyset$. Otherwise, fix $P\in \mathcal{P}$ and, since $\mathcal{P}$ is finite, let $L\subset E(G)$ be a finite edge set such that $\omega_Q\notin \Omega_E(L,\omega_P)$ for every $Q\in \{Q'\in \mathcal{P}:\omega_{Q'}\neq \omega_P\}$. By possibly adding (finitely many) edges to $L$, we can even assume that no edge from $\displaystyle \bigcup_{\substack{Q\in \mathcal{P} \\\omega_Q \neq \omega_ P}}E(Q)$ lies in $C_E(L,\omega_P)$. However, there is a ray $r'$ in $C_E(L,\omega_P)$ whose edge-end $[r']_E$ belongs to $A$, since $\omega_P\in \overline{A}\setminus A$. By changing the choice of $P$ within $\{Q\in \mathcal{P}: \omega_Q = \omega_P\}$ if necessary, we can assume that $r'$ starts in a vertex of $P$ and does not intersect $\displaystyle \bigcup_{\substack{Q\in \mathcal{P} \\\omega_Q = \omega_ P}}V(Q)$ in any other point, because $[r']_E \neq \omega_P$. This defines an $S-A$ path $P'$ by concatenating $r'$ with an initial segment of $P$, so that $(\mathcal{P}\setminus\{P\})\cup \{P'\}$ contradicts the minimality of $X$.      
	
\end{proof}

Applying the above result to both sides of a bipartition of $V(G)$ given by a minimum $A-B$ cut, we obtain the following corollary as a particular case of Theorem \ref{t1}:

\begin{corollary} \label{finiteseparation}
	Given a graph $G$, let $A,B\subset \Omega_E(G)$ be two sets of edge-ends which can be separated by a finite edge set $F\subseteq E(G)$. If $F$ has minimum size with that property, there is a family $\mathcal{P}$ of $|F|-$many edge-disjoint $A-B$ paths.
\end{corollary}
\begin{proof}
	Since $F$ has minimum size, we can write $F = \delta(V_1)$ for some $V_1\subset V(G)$ by Lemma \ref{cuts}. Consider $V_2 = V(G)\setminus V_1$ and let $S_i$ be the set of endpoints of edges of $F$ in $V_i$, for $i=1,2$. Assume that the representatives of edge-ends in $A$ have their tails in $G[V_1]$, while the representatives of edge-ends in $B$ have their tails in $G[V_2]$. Hence, $F$ is an edge set of minimum size separating $A$ from $S_2$ in $G[V_1\cup S_2]$. By Lemma \ref{vertex-end}, there is a family $\mathcal{P}_1 = \{P_1^1,P_2^1,\dots,P_{|F|}^1\}$ of edge-disjoint $S_2-A$ paths. Analogously, there is a set $\mathcal{P}_2 = \{P_1^2,P_2^2,\dots,P_{|F|}^2\}$ of edge-disjoint $S_1-B$ paths in $G[V_2\cup S_1]$. By changing the enumeration of the elements in $\mathcal{P}_2$ if necessary, we observe that $P_i^1$ and $P_i^2$ intersect in precisely one edge from $F$ for every $1 \leq i \leq |F|$. Therefore, the concatenation $P_i^1P_i^2$ is a well defined $A-B$ path, so that $\mathcal{P} = \{P_i^1P_i^2 : 1 \leq i \leq |F|\}$ is the claimed family.
\end{proof}

Finishing this section, we will discuss how Lemma \ref{vertex-end} is useful for extending already known generalizations of the Lovász-Cherkassky Theorem, specially in order to draw a proof for Theorem \ref{t1} as one of our main results. Before that, we first recall its classical statement within finite graph theory:

\begin{theorem}[Lovász-Cherkassky]\label{lovaszfinito}
	In a given finite graph $G$, fix $T\subset V(G)$. If $G$ is inner-Eulerian for $T$, the maximum number of pairwise edge-disjoint $T-$paths in $G$ is equal to $$\frac{1}{2}\sum_{t\in T}\lambda(t,T\setminus \{t\}),$$ where $\lambda(t,T\setminus \{t\})$ denotes the minimum size of a cut separating $t$ from $T\setminus \{t\}$. 
\end{theorem}

In the above result, a $T-$\textbf{path} in $G$ is a $a-b$ path $P$ as in Definition \ref{paths}, where $a,b \in T$ are distinct endpoints. On the other hand, $G$ is \textbf{inner-Eulerian} for $T$ if every vertex of $V(G)\setminus T$ has even degree. Under this hypothesis, Theorem \ref{lovaszfinito} claims that there is a family $\mathcal{P}$ of edge-disjoint $T-$paths attaining an optimal size. After all, in a family of at least $1 + \displaystyle \frac{1}{2}\sum_{t\in T}\lambda(t,T\setminus \{t\})$ paths, two of them must intersect in an edge of a minimum cut separating some vertex $t$ from $T\setminus \{t\}$. In particular, for every $t\in T$, the maximum family $\mathcal{P}$ contains precisely $\lambda(t,T\setminus \{t\})$ paths that end at $t$. Considering this property, Jacobs, Joó, Knappe, Kurkofka and Melcher obtained a version of the Lovász-Cherkassky Theorem for locally finite graphs and their ends: 

\begin{theorem}[\cite{jacobs}, Theorem 1]\label{jooetal}
	Let $G$ be a locally finite graph and fix $T\subset \hat{V}(G) $ a discrete subset. Suppose that $|\delta(X)|$ is even or infinite for every $X\subset V(G)$ for which $T\subset \overline{X}$. Then, there is a family $\mathcal{P}$ of edge-disjoint graphic $T-$paths such that, for every $t\in T$, the number of graphic $\{t\}-(T\setminus \{t\})$ paths is equal to $\lambda(t,T\setminus \{t\})$. 
\end{theorem}

Following the notation from Definition \ref{paths}, a \textbf{graphic} $T-$\textbf{path} is a graphic $T-T$ path $P$ such that $|\overline{P}\cap T| = 2$. Intuitively, $P$ intersects $T$ precisely at its endpoints. Since Theorem \ref{jooetal} is restricted to locally finite graphs, we can rely on both Definitions \ref{paths} and \ref{epaths} to state the above result, as there is no distinction between ends and edge-ends in this graph family. Analogously, $\lambda(t,T\setminus \{t\})$ now denotes the minimum size of a $\{t\} - T\setminus\{t\}$ (edge-)separator, as in Definition \ref{eseparator}. By assuming that $T\subset \hat{V}(G)$ is discrete, $\lambda(t,T\setminus \{t\})$ is a well-defined natural number.

Comparing the Theorems \ref{lovaszfinito} and \ref{jooetal}, the ``inner-Eulerian'' assumption over $G$ is, in the locally finite generalization, replaced by a parity condition over some finite cuts. In the Appendix of \cite{jacobs}, Jacobs, Joó, Knappe, Kurkofka and Melcher discuss the need of this alternative notion in order to conclude Theorem \ref{jooetal}. Despite that, the new assumption indeed restricts to the original hypothesis for finite graphs. More precisely, if $G$ is finite and every vertex of $G\setminus T$ has finite degree, let $\tilde{G}$ be the multigraph obtained by contracting a set $X\supset T$ to a new vertex $v$. Since $\tilde{G}$ has an even number of vertices of odd degree, the degree of $v$ must be even, because so is the degree of every vertex from $V(\tilde{G})\setminus \{v\} = V(G)\setminus X$. Noticing that $|\delta(X)|$ is the degree of $v$ in $\tilde{G}$, the above result is, in fact, a generalization of the classical Lovász-Cherkassky Theorem for finite graphs. On the other hand, its proof relies on the countable version of Theorem \ref{lovaszfinito}, previously obtained by Joó in \cite{joo}. Below, we present an updated statement of Joó's result, established by him in \cite{joogeral} for arbitrary graphs:

\begin{theorem}[\cite{joogeral}, Theorem 1.2]\label{joo}
	Let $G$ be a (multi)graph and fix $T\subset V(G)$. Suppose that $|\delta(X)|$ is even or infinite for every $X\subset V(G)$ in which $T\subset X$. Then, there is a family $\mathcal{P}$ of edge-disjoint $T-$paths such that, for every $t\in T$, there is a cut separating $t$ from $T\setminus \{t\}$ obtained by the choice of precisely one edge from each path of $\mathcal{P}_t = \{P\in \mathcal{P}: t \text{ is an endpoint of }P\}$.
\end{theorem}

The proof of Theorem \ref{jooetal} is drawn by an appropriate reduction to Theorem \ref{joo}. In \cite{jacobs}, previously known results about the connectivity of locally finite graphs and its ends were also recalled for that study, such as Lemma 10 in \cite{finitos-um}. Now, in order to conclude the Lovász-Cherkassky Theorem for arbitrary infinite graphs and its edge ends, we can combine Lemma \ref{vertex-end} to the main idea for proving Theorem \ref{jooetal}. To this aim, a $T-$\textbf{path} in the result below means a $T-T$ \textbf{path} $P$ as in Definition \ref{epaths} and such that $|\overline{P} \cap T| = 2$: 

\begin{reptheorem}{t1}
	\textit{	Let $G = (V,E)$ be a graph and fix $T\subset \hat{E}(G)$ a discrete subspace of $\hat{E}(G)$. Suppose that $|\delta(X)|$ is even or infinite for every $X\subset V(G)$ in which $T\subset \overline{X}$. Then, there is a collection $\mathcal{P}$ of edge-disjoint $T-$paths such that, for every $t\in T$, there is a cut separating $t$ from $T\setminus \{t\}$ that lies on the family $\mathcal{P}_t = \{P\in \mathcal{P}: t \text{ is an endpoint of }P\}$.}
\end{reptheorem}    
\begin{proof}[Revisited proof of Theorem 1 in \cite{jacobs}]
	We shall rely on Lemma \ref{discreto}, which will be proven in the next section with the support of suitable tools that are developed there. Since $T\subset \hat{E}(G)$ is discrete, that result claims the existence of a family $\{C_{\omega}: \omega \in T\cap \Omega_E(G)\}$ of connected subgraphs of $G$ for which the following properties are verified:
	
	\begin{itemize}
		\item $C_{\omega_1}\cap C_{\omega_2} = \emptyset$ if $\omega_1 \neq \omega_2$;
		\item For every $\omega \in T\cap \Omega_E(G)$, the cut $F_{\omega}:=\delta(C_{\omega})$ is finite and $\hat{E}(F_{\omega},\omega)\cap T = \{\omega\}$.
	\end{itemize}

	Then, for every $\omega \in \Omega_E(G)$, let $\hat{C_{\omega}}$ denote the subgraph of $G$ induced by $C_{\omega}$ and the endpoints of the edges in the finite cut $F_{\omega} := \delta(C_{\omega})$. In particular, $F_{\omega}$ turns out to be a finite $N(C_{\omega})-\{\omega\}$ separator in $G$. By passing $C_{\omega}$ to a connected subgraph if necessary, Lemma \ref{cuts} allows us to assume that no edge set in $G$ with fewer than $|F_{\omega}|$ elements is also a $N(C_{\omega})-\{\omega\}$ separator. Hence, now according to Lemma \ref{vertex-end}, there is a family $\mathcal{P}_{\omega}$ of $|F_{\omega}|-$many edge-disjoint $N(C_{\omega})-\{\omega\}$ paths in $\hat{C_{\omega}}$.

	In its turn, let $\tilde{G}$ be the (multi)graph obtained from $G$ by contracting each $C_{\omega}$ to a vertex $v_{\omega}$. In this new graph, for every $\omega \in T\cap \Omega_E(G)$, the edges from $F_{\omega}$ are precisely the ones that are incident in $v_{\omega}$. Then, define also $T' = (T\cap V(G))\cup \{v_{\omega} : \omega \in T\cap \Omega_E(G)\}$ and let $X'\subseteq V(\hat{G})$ be any set of vertices containing $T'$. We observe that the edges in $\delta(X')\subseteq E(\hat{G})$ are precisely those in $G$ from the cut defined by $X:=\displaystyle (X'\cap V(G))\cup \bigcup_{\omega \in T\cap \Omega_E(G)}C_{\omega}$. Since $T\subseteq \overline{X}$, we know by assumption that this cut in $G$ is even or infinite, concluding that $\delta(X')$ is even or infinite. Therefore, the hypothesis of Theorem \ref{joo} (which has no mention of ends or edge-ends) are verified by $\hat{G}$ and $T'$. Then, let $\mathcal{P}'$ be a collection of edge-disjoint $T'-$paths such that, for every $t\in T'$, there is a cut $E_t$ which lies on the family $\mathcal{P}_t' = \{P\in \mathcal{P}': t \text{ is an endpoint of }P\}$ and separates $t$ from $T'\setminus \{t\}$.

	Finishing the proof, we describe the claimed collection $\mathcal{P}$ of edge-disjoint $T-$paths as follows: for every $\omega \in T\cap \Omega_E(G)$, we concatenate different paths from $\mathcal{P}_{v_{\omega}}'$ with different paths from the family $\mathcal{P}_{\omega}$. Note that $ |\mathcal{P}_{v_{\omega}}'| = |E_{v_{\omega}}|\leq |F_{\omega}| = |\mathcal{P}_{\omega}|$, because $|F_{\omega}|$ is precisely the degree of $v_{\omega}$ in $\tilde{G}$. Then, this construction of $\mathcal{P}$ is well defined, although some paths of $\mathcal{P}_{\omega}$ are not extended to paths of $\mathcal{P}$ if $|E_{v_{\omega}}|<|F_{\omega}|$.     
\end{proof}

\section{Topological and combinatorial separations}\label{s3}

We observe that our main results indicated in the introduction mention topological conditions on relevant sets of vertices or edge-ends. On one hand, the Lovász-Cherkassky theorem for infinite graphs requires that the set $T$ as in Theorem \ref{t1} is discrete. On the other, the sets $A$ and $B$ in Theorem \ref{t2} satisfy $\overline{A}\cap B = \emptyset = A\cap \overline{B}$. However, these hypotheses were not explored so far. In fact, Lemma \ref{vertex-end} was the core of the previous section, but it was obtained when supposing that its relevant objects could be separated by only finitely many edges. In its turn, this section shall explain how the topological assumptions on Theorems \ref{t1} and \ref{t2} ensure the well-definition of infinite edge sets as separators. In particular, we draw some combinatorial interpretations for standard notions within the topology of end and edge-end spaces.

Before that, it is useful to first recall some recent results towards the end structure of infinite graphs. For example, revisiting the work of Kurkofka and Pitz in \cite{representacao}, an \textbf{envelope} for a vertex subset $U\subset V(G)$ of a graph $G$ is a superset $U^*\supset U$ satisfying the two axioms below:

\begin{itemize}
	\item The neighborhood $N(C)\subseteq U^*$ is finite for every connected component $C$ of $G\setminus U^*$;
	\item $\overline{U^*}\setminus U^* = \overline{U}\setminus U$, i.e., the boundaries of $U$ and $U^*$ in the topological space $\hat{V}(G)$ define the same set of ends.
\end{itemize}

Envelopes for an arbitrary $U$ are constructed by Theorem 3.2 of \cite{representacao}, and we can even assume that they induce connected subgraphs. Moreover, the first condition above claims precisely that envelopes are subsets of \textbf{finite adhesion}, in the sense that the connected components of their complements have finite neighborhood. If $C$ is such a connected component and it contains a ray $r$, then $\hat{V}(N(C),[r])$ is a basic open set in $\hat{V}(G)$ around the end $[r]$. Due to this simple observation, most definitions and results in this section are statements about convenient subgraphs of finite adhesion.

Incidentally, rayless normal trees are probably the most well-known examples of these subgraphs. In fact, we recall that a tree $T$ in a graph $G$ is \textbf{normal} if, after fixing a tree-order $\leq$, any $V(T)-$path $P$ in $G$ has comparable endpoints regarding $\leq$. In particular, there is no edge in $G$ connecting incomparable elements of $T$ and, for every connected component $C$ of $G\setminus T$, its neighborhood $N(C)$ is totally ordered by $\leq$. In this case, $T$ has finite adhesion if it contains no rays. Considering that, Kurkofka, Melcher and Pitz in \cite{paracompacidade} discussed how rayless normal trees can approximate the end structure of a given graph:

\begin{theorem}[\cite{paracompacidade}, Theorem 1]\label{aproximacao}
	Let $G$ be an infinite graph. Fix an open cover for the subspace $\Omega(G)$ of the form $\mathcal{C} = \{\hat{V}(S_{\omega}, \omega): \omega \in \Omega(G)\}$. Then, there is a rayless normal tree in $G$ which refines $\mathcal{C}$. More precisely, for every connected component $C'$ of $G\setminus T$, there is $C \in \mathcal{C}$ such that $C'\subset C$.
\end{theorem}

Compiling many consequences of the above result, the paper \cite{paracompacidade} also contains unified proofs of previous works in infinite graph theory. Below, by relying on the notion of envelopes, we draw one more application of Theorem \ref{aproximacao}: 

\begin{lemma}\label{l1}
	For a fixed graph $G$, let $X\subset \hat{V}(G)$ be a discrete subspace. Then, there is a subgraph $H$ of $G$ containing $X\cap V(G)$ and for which the following properties hold:
	\begin{itemize}
		\item[$i)$] $H$ has finite adhesion;
		\item[$ii)$] For every two different ends $\omega_1,\omega_2 \in X$, there are distinct connected components $C_1$ and $C_2$ of $G\setminus H$ such that $\omega_1 \in \overline{C_1}$ and $\omega_2\in \overline{C_2}$. 
	\end{itemize}
\end{lemma}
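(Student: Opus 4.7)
The plan is to build $H$ as the subgraph of $G$ induced by an envelope (in the sense of \cite{representacao}) of a vertex set $U$ constructed from the discreteness witnesses of $X$. For each $\omega\in X\cap\Omega(G)$, the discreteness of $X$ provides a finite $S_\omega\subset V(G)$ with $\hat{V}(S_\omega,\omega)\cap X=\{\omega\}$. Set
$$U \;=\; (X\cap V(G)) \cup \bigcup_{\omega\in X\cap\Omega(G)} S_\omega,$$
and apply Theorem 3.2 of \cite{representacao} to obtain an envelope of $U$, which we take as $V(H)$. Then the induced subgraph $H$ has finite adhesion and satisfies $\overline{V(H)}\cap\Omega(G)=\overline{U}\cap\Omega(G)$. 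Property $(i)$ follows from the envelope construction, and $(iii)$ from the inclusion $V(H)\supset U\supset X\cap V(G)$.

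For $(ii)$, fix distinct $\omega_1,\omega_2\in X\cap\Omega(G)$. Since $\omega_2\in X$ and $\hat{V}(S_{\omega_1},\omega_1)\cap X=\{\omega_1\}$, the end $\omega_2$ does not belong to $\hat{V}(S_{\omega_1},\omega_1)$, so $S_{\omega_1}$ already separates $\omega_1$ from $\omega_2$ in $G$ and their representatives have tails in distinct components $C(S_{\omega_1},\omega_1)\neq C(S_{\omega_1},\omega_2)$ of $G\setminus S_{\omega_1}$. Since $V(H)\supset S_{\omega_1}$, every connected component of $G\setminus H$ sits inside a single component of $G\setminus S_{\omega_1}$. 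Hence, once one finds for each $i\in\{1,2\}$ a component $C_i$ of $G\setminus H$ with $\omega_i\in\overline{C_i}$, the inclusion $C_i\subset C(S_{\omega_1},\omega_i)$ forces $C_1\neq C_2$, as required.

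The main obstacle is producing these components, equivalently showing that $\omega_i\notin\overline{V(H)}$. By the envelope identity $\overline{V(H)}\cap\Omega(G)=\overline{U}\cap\Omega(G)$, it is enough to prove $\omega_i\notin\overline{U}$: a finite witness $S'$ with $\hat{V}(S',\omega_i)\cap V(H)=\emptyset$ then makes $C(S',\omega_i)$ connected in $G\setminus H$ and so embedded in a unique component $C_i$ whose closure contains $\omega_i$. The choice of $S_{\omega_i}$ already prevents $\hat{V}(S_{\omega_i},\omega_i)$ from meeting $X\cap V(G)$ or $S_{\omega_i}$ itself, but the separators $S_\omega$ with $\omega\neq\omega_i$ might deposit vertices inside every neighborhood of $\omega_i$. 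Excluding this accumulation is the technical heart of the proof and calls for a simultaneous refinement of the $S_\omega$'s --- placing each $S_\omega$ on the boundary of $C(S_\omega,\omega)$ and outside the interior of every $C(S_{\omega'},\omega')$ with $\omega'\neq\omega$ --- so that the union $U$ stays ``locally finite'' near every end of $X$ and the envelope argument closes.
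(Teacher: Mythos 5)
Your reduction of property $(ii)$ to the single claim ``$\omega_i\notin\overline{U}$'' is correct as far as it goes, and the framing via envelopes matches the spirit of the paper. But the proposal stops exactly at the point where the actual work lies, and you say so yourself: nothing in the discreteness of $X$ prevents the union $\bigcup_{\omega\in X\cap\Omega(G)}S_\omega$ from accumulating at some $\omega_i\in X$, since each $S_\omega$ is chosen with reference only to $\omega$ and may sit anywhere in the graph, in particular inside every neighbourhood of $\omega_i$. The suggested repair --- choosing the $S_\omega$ ``simultaneously'' so that each one avoids the interior of every other $C(S_{\omega'},\omega')$ --- is not an argument: when $X$ is infinite this is a system of mutually dependent constraints (each region $C(S_{\omega'},\omega')$ changes as you re-choose $S_{\omega'}$), and establishing that it can be satisfied is essentially equivalent to the lemma itself. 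So there is a genuine gap, not a routine verification left to the reader.

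The paper closes this gap with two tools you do not use. First, when $X$ is not closed it takes a $\subseteq$-maximal family $\mathcal{W}$ of disjoint rays with ends in $\overline{X}\setminus X$ and forms a connected envelope $U^*$ of $(X\cap V(G))\cup\bigcup_{s\in\mathcal{W}}V(s)$; the maximality argument shows $\overline{U^*}\setminus U^*=\overline{X}\setminus X$, so $U^*$ captures the accumulation set of $X$ while staying away from every end of $X$ itself. Second, inside each component $D$ of $G\setminus U^*$ the trace of $\Omega(D)$ meets $\overline{X}$ only in points of $X$, so the sets $\hat{V}(S_\omega,\omega)$ genuinely cover $\Omega(D)$ and Theorem \ref{aproximacao} produces a rayless normal tree $T_D$ refining that cover; it is this tree (not a further choice of separators) that provides, around each $\omega\in X\cap\Omega(D)$, a component of $D\setminus T_D$ whose closure meets $X$ only in $\omega$. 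Note also that your cover $\{\hat{V}(S_\omega,\omega):\omega\in X\}$ does not cover $\Omega(G)$ when $\overline{X}\setminus X\neq\emptyset$, so Theorem \ref{aproximacao} cannot be invoked globally either; the two-stage structure of the paper's proof is what makes the refinement step legitimate. To complete your argument you would need to import both steps.
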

\begin{proof}
	Applying Zorn's Lemma, let $\mathcal{W}$ be a $\subseteq-$maximal family of disjoint rays whose ends belong to $\overline{X}\setminus X$. Note that $\mathcal{W} = \emptyset$ if $X$ is closed. Then, consider a connected envelope $U^*$ for the vertex set $U := (X\cap V(G))\cup \displaystyle \bigcup_{s\in \mathcal{W}}V(s)$.

	We claim that $\overline{U^*}\setminus U^* = \overline{U}\setminus U = \overline{X}\setminus X$, where the first equality follows from the definition of envelopes. In fact, an element of $\overline{X}\setminus X$ is an end $\omega\in \overline{X}$. In this case, any of its representatives $r$ must intersect infinitely many vertices of $U$: otherwise, $\mathcal{W}\cup \{r'\}$ would contradict the $\subseteq-$maximality of $\mathcal{W}$, where $r'$ denotes a tail of $r$ such that $V(r')\cap U = \emptyset$. In particular, $C(S,\omega)\cap U \neq \emptyset$ for every finite set $S\subset V(G)$, proving that $\omega\in \overline{U}\setminus U$. Conversely, given $\omega\in \overline{U}\setminus U$, we must have $C(S,\omega)\cap U \neq \emptyset$ for every finite set $S\subset V(G)$. Since $\mathcal{W}$ is a family of disjoint rays, we may add finitely many vertices to a finite set $S\subset V(G)$ in order to assume that $s\setminus S$ is a tail of $s$ for every $s\in \mathcal{W}$. In this case, $C(S,\omega)\cap U$ must contain a vertex from $X\cap V(G)$ or the tail of a ray in $\mathcal{W}$, whose end belongs to $\overline{X}\setminus X$ by definition. In other words, the end $\omega$ lies in the closure of $(\overline{X}\setminus X)\cup (X\cap V(G))$, meaning that $\omega\in \overline{X}\setminus X$ since $X$ is discrete by assumption. Once showed that $\overline{U^*}\setminus U^* = \overline{X}\setminus X$, it follows that every end $\omega \in X$ has a representative in some connected component of $G\setminus U^*$. If not, we would have $C(S,\omega)\cap U^*\neq \emptyset$ for every finite set $S\subset V(G)$. This would mean that $\omega \in X\cap \overline{U^*}$, contradicting the fact that $\overline{U^*}\cap X \subseteq U^*$ contains no end.

	Now, let $D$ be a connected component of $G\setminus U^*$, whose neighborhood $N(D)$ is then finite. In particular, $\Omega(D) = \overline{D}\cap \Omega(G)$ is an open subspace of $\Omega(G)$. For every end $\omega \in \Omega(D)$, then either $\omega\in X$ or $\omega\notin \overline{X}$, since $\hat{V}(N(D),\omega)\cap U^* = \emptyset$ and $\overline{U^*}\setminus U^* = \overline{X}\setminus X$. In the first case, there is a finite set $S_{\omega}\subset V(G)$ such that $\hat{V}(S_{\omega},\omega)\cap X = \{\omega\}$, because $X$ is discrete. In the latter one, there is a finite set $S_{\omega}\subset V(G)$ for which $\hat{V}(S_{\omega},\omega)\cap X =\emptyset$. Clearly $\Omega(D)$ is thus covered by the family $\{\hat{V}(S_{\omega},\omega): \omega \in \Omega(D)\}$. Therefore, Theorem \ref{aproximacao} guarantees the existence of a rayless normal tree $T_D$ in $D$ with the following property: for every connected component $C$ of $D\setminus T_D$, there is an end $\omega\in \Omega(D)$ such that $C\subset \hat{V}(S_{\omega},\omega)$. In particular, by the choice of $S_{\omega}$, there is at most one element in $\overline{C}\cap X$. On the other hand, for every $\omega \in \omega(D)\cap X$ there is indeed a connected component $C_{\omega}^D$ of $D\setminus T_D$ such that $\omega \in \overline{C_{\omega}^D}$, once $T_D$ is a rayless normal tree. Then, consider the following family of connected subgraphs of $G$:
	
	$$\mathcal{K} = \{C_{\omega}^D: \omega \in \Omega(D)\cap X, D\text{ is a connected component of }G\setminus U^*\}.$$

	Finishing the proof, we will show that the graph $H$ claimed by the statement can be chosen as the one induced by $G\setminus \displaystyle \bigcup_{K\in \mathcal{K}}V(K)$. We first note that $H$ contains $U^*$, verifying the inclusion $X\cap V(G)\subset V(H)$. In addition, there is no edge in $H$ connecting distinct pairs $C_{\omega_1}^{D_1},C_{\omega_2}^{D_2}\in \mathcal{K}$. After all, $C_{\omega_1}^{D_1}$ and $C_{\omega_2}^{D_2}$ are contained in distinct connected components of $G\setminus U^*$ if $D_1 \neq D_2$. If $D: = D_1 = D_2$, then $C_{\omega_1}^{D_1}$ and $C_{\omega_2}^{D_2}$ are distinct connected components of $D\setminus T_D$, because $\omega_1,\omega_2 \in X\cap \Omega(D)$ are different ends. Thus, $\mathcal{K}$ is also the set of connected components of $G\setminus H$. Moreover, for every $\omega \in X$ there is a connected component $D$ of $G\setminus U^*$ in which $\omega$ has a representative, meaning that $\omega \in C_{\omega}^D$. To summarize, we just verified the second item of the statement. Finally, $H$ has finite adhesion since so do the envelope $U^*$ and the rayless normal trees of $\{T_D : D\text{ is a connected component of }G\setminus U^*\}$. In other words, a given component $C_{\omega}^D$ has its neighborhood contained in the finite set $N(D)\cup N_{T_D}(C_{\omega}^D)$.
\end{proof}

In a similar fashion, we remark that, if $X\subset \Omega(G)$ is a closed set of ends in a graph $G$, then there is also a subset $U^*\subset V(G)$ of finite adhesion satisfying $\overline{U^*}\setminus U^* = \overline{X}$. Indeed, we may consider $U^*$ as an envelope for the vertex set $U\subseteq V(G)$ of a $\subseteq-$maximal disjoint family of rays whose end belong to $X$, observing that $\overline{U}\setminus U=\overline{X} = X$ by the same reasoning applied in the second paragraph of the above proof. If we now write $X = \overline{A}$ for a given end set $A\subset \Omega(G)$ and we fix $B\subset \Omega(G)$ such that $\overline{A}\cap B = \emptyset$, then every representative $r$ of an end in $B$ has a tail in a connected component $C_{[r]}$ of $G\setminus U^*$. When we further assume that $A\cap \overline{B} = \emptyset$, then an envelope for the set $\displaystyle \bigcup_{\omega \in B}N(C_{\omega})$ plays the role of a graph $H$ as in the statement below:

\begin{lemma}[\cite{polat}, Theorem 1.2]\label{l2}
	Let $A,B\subset \Omega(G)$ be two sets of end in a graph $G$ which are topologically separated, i.e., such that $\overline{A}\cap B = \emptyset = A \cap \overline{B}$. Then, there is a subgraph $H$ of $G$ satisfying the two properties below:
	\begin{itemize}
		\item[$i)$] $H$ has finite adhesion and $\overline{H}\cap (A\cup B) = \emptyset$;
		\item[$ii)$] Given two rays $r$ and $s$ in $G$ such that $[r]\in A$ and $[s]\in B$, their tails in $G\setminus H$ are contained in distinct connected components.
	\end{itemize}
\end{lemma}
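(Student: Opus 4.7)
The plan is to carry out the construction sketched in the paragraph preceding the statement. First, I would extract from the proof of Lemma \ref{l1} the following auxiliary fact: whenever $X\subset\Omega(G)$ is closed, there exists a vertex set $U\subset V(G)$ of finite adhesion with $\overline{U}\setminus U=X$. Indeed, the construction there produces $U$ as a connected envelope of a maximal family of disjoint rays whose ends lie in $X$, and the boundary identity $\overline{U}\setminus U=X$ is exactly what is verified during that argument. I would apply this observation to $X=\overline{A}$ to obtain $U$ of finite adhesion with $\overline{U}\setminus U=\overline{A}$. Since $\overline{A}\cap B=\emptyset$, no end of $B$ lies in $\overline{U}$, so by finite adhesion each $\omega\in B$ has a representative whose tail lies in a unique connected component $C_\omega$ of $G\setminus U$.

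Next, I would define the interface set $D=\{v\in U : v\text{ has a neighbor in }C_\omega\text{ for some }\omega\in B\}$ and take $H$ to be a connected envelope of $D$. Property (i) then reduces to two claims about $\overline{D}\cap\Omega(G)$, since $\overline{H}\setminus H=\overline{D}\setminus D$ by the envelope axioms. For $\omega\in B$ the component $C_\omega$ witnesses $\omega\notin\overline{U}\supseteq\overline{D}$. For $\omega\in A$ the key claim is that $\overline{D}\cap\Omega(G)\subseteq\overline{B}$, which together with $A\cap\overline{B}=\emptyset$ forces $\omega\notin\overline{D}$. I expect this claim to be the main technical obstacle: given a finite separator $S\subset V(G)$, I would enlarge it to $S'=S\cup\bigcup_{i\le k}N(C_i)$, where $C_1,\dots,C_k$ are the finitely many components of $G\setminus U$ meeting $S$. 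A witness $v'\in D\cap C(S',\omega)$ would then necessarily have a neighbor in some $C_{\omega''}$ with $\omega''\in B$ and $C_{\omega''}\cap S=\emptyset$ (otherwise $v'\in N(C_{\omega''})\subseteq S'$), producing a path in $G\setminus S$ from $v'$ into the tail of a representative of $\omega''$ and hence $\omega''\in\Omega(S,\omega)$.

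For property (ii) I would rely on the structural observation that each $C_\omega$, $\omega\in B$, is a component not only of $G\setminus U$ but also of $G\setminus D$, since its entire neighborhood in $G$ lies in $D$ by construction. Hence in $G\setminus D$ the union $K=\bigcup_{\omega\in B}C_\omega$ is separated from $V(G)\setminus(K\cup D)$. Any ray representing an end of $A$ eventually avoids $D$ (as $A\cap\overline{D}=\emptyset$ from the previous step), so its tail sits inside $V(G)\setminus(K\cup D)$, while a representative of any end of $B$ has its tail in $K$. These tails therefore lie in different components of $G\setminus D$, and a fortiori of $G\setminus H\subseteq G\setminus D$. Finite adhesion of $H$ comes for free from the envelope construction, completing the verification.
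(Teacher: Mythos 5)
Your construction is exactly the one the paper itself sketches in the paragraph preceding the statement; note that the paper does not actually prove Lemma \ref{l2}, but attributes it to Polat's multiending machinery in \cite{polat} and only indicates this envelope-based route. Your elaboration of that route is essentially correct: taking $U$ of finite adhesion with $\overline{U}\setminus U=\overline{A}$, collecting the components $C_\omega$ of $G\setminus U$ that house the tails of the ends of $B$, and enveloping the interface set $D$ is the intended construction, and your verification that $\overline{D}\cap\Omega(G)\subseteq\overline{B}$ via the enlarged separator $S'=S\cup\bigcup_{i\le k}N(C_i)$ is the key point the paper leaves implicit. The one place where you skip a needed justification is in property $ii)$: from $A\cap\overline{D}=\emptyset$ you conclude that the tail of a ray representing an end of $A$ ``sits inside $V(G)\setminus(K\cup D)$'', but avoiding $D$ only places that tail in \emph{some} component of $G\setminus D$, which could a priori be one of the $C_\omega\subseteq K$. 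To exclude this you must invoke the boundary identity $\overline{U}\cap\Omega(G)=\overline{A}$ together with the finite adhesion of $U$: if a representative of $\omega_A\in A$ had a tail inside $C_\omega$, then the finite set $N(C_\omega)$ would separate $\omega_A$ from $U$, contradicting $\omega_A\in\overline{U}$. With that one observation added, your argument is complete and matches the approach the paper intends.
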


The subgraph $H$ in the above result may be understood as a (possibly infinite) structure that separates $A$ and $B$ combinatorially. Its existence was first established by Polat in \cite{polat}, who relied in his theory of \textit{multiendings} to that aim. In his work, especially when stating Theorem \ref{mengerpolat}, $H$ is an example of the objects he introduces as $A-B$ separators. In fact, Polat's definition of separators reads precisely as follows:

\begin{definition}[\cite{polat}, Definition 1.1]\label{separator}
	Fix a graph $G$ and two end subsets $A,B\subset \Omega(G)$. Then, $S\subset V(G)$ is called an $A-B$ \textbf{separator} if $\overline{S}\cap (A\cup B) = \emptyset$ and, for every connected component $C$ of $G\setminus S$, we have $\overline{C}\cap A = \emptyset$ or $\overline{C}\cap B = \emptyset$. This requires $A$ and $B$ to be disjoint, since every end from $A\cup B$ has a representative ray in $G\setminus S$. Moreover, $G\setminus S$ must contain no $A-B$ path as in Definition \ref{paths}.     
\end{definition}

Incidentally, Theorem 1.2 in \cite{polat} claims also the converse of Lemma \ref{l2}: i.e., there is an $A-B$ separator in a graph $G$ if, and only if, $A,B\subseteq \Omega(G)$ satisfy $\overline{A}\cap B = \emptyset = A\cap \overline{B}$. However, in this paper we are mainly interested in edge-connectivity results. Therefore, we shall now restate the conclusions of Lemmas \ref{l1} and \ref{l2} for studying suitable sets of edge-ends rather than subspaces of $\Omega(G)$. As a familiar tool around these discussions, we recall that the \textbf{line graph} $G'$ of a given graph $G$ is defined by setting $V(G') = E(G)$ and 
\begin{center}
	$ef \in E(G')$ if, and only if, $e$ and $f$ are \textbf{adjacent edges},
\end{center} namely, $e$ and $f$ share a common endpoint.

The following observation, then, easily translates paths in $G$ to paths in $G'$ and conversely:

\begin{lemma}\label{41}
	Fix $e$ and $f$ two non-adjacent edges in a connected graph $G$. Therefore, the two statements below hold:
	\begin{itemize}
		\item[i)] If $v_0v_1v_2\dots v_n$ is a path in $G$ such that $e = v_0v_1$ and $f = v_{n-1}v_n$, then $(v_{i}v_{i+1})_{i < n}$ defines a path in $G'$ connecting $e$ and $f$;
		\item[ii)] Conversely, if $e_0e_1e_2\dots e_n$ is a vertex-minimal path in $G'$ connecting the edges $e = e_0$ and $f = f_n$ of $G$, then there is a path $v_0v_1v_2\dots v_{n+1}$ in $G$ such that $e_i = v_iv_{i+1}$ for each $0 \leq i \leq n$.  
	\end{itemize}
\end{lemma}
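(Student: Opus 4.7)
The plan is as follows. Part $(i)$ is essentially a direct reading of the definition of the line graph: for each $0 \le i \le n-2$, the edges $v_iv_{i+1}$ and $v_{i+1}v_{i+2}$ of $G$ share the vertex $v_{i+1}$, hence are adjacent as vertices of $G'$. Since $v_0v_1\cdots v_n$ is a path in $G$ its edges are pairwise distinct, so $(v_iv_{i+1})_{i<n}$ is a genuine path — not merely a walk — in $G'$ starting at $e = v_0v_1$ and ending at $f = v_{n-1}v_n$. This part requires no further ideas.

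For part $(ii)$, the strategy is to construct the required vertex sequence directly. Since $G$ is simple, two distinct adjacent edges of $G$ meet in exactly one vertex. For each $1\le i\le n$, I would let $v_i$ be the common endpoint of $e_{i-1}$ and $e_i$ in $G$; then let $v_0$ be the remaining endpoint of $e_0$ and $v_{n+1}$ the remaining endpoint of $e_n$. By construction $e_i = v_iv_{i+1}$ for every $0\le i\le n$, so the only real task is to show that the vertices $v_0,v_1,\ldots,v_{n+1}$ are pairwise distinct, which will make $v_0v_1\cdots v_{n+1}$ a path in $G$.

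This distinctness check is the crux, and it is the only place where vertex-minimality is used. I would interpret a \emph{vertex-minimal} path $e_0e_1\cdots e_n$ in $G'$ as one admitting no chord, i.e., $e_ae_b\notin E(G')$ whenever $|a-b|\ge 2$; otherwise one could contract $e_a,e_{a+1},\ldots,e_b$ to the single edge $e_ae_b$ and obtain a shorter $e_0$-$e_n$ path in $G'$ on a proper subset of the original vertices. Now suppose $v_i = v_j$ with $i<j$. If $j=i+1$ then $e_i$ would be a loop, contradicting the simplicity of $G$. If $j\ge i+2$, recall that for $1\le k\le n$ the vertex $v_k$ lies on both $e_{k-1}$ and $e_k$, while $v_0$ lies only on $e_0$ and $v_{n+1}$ only on $e_n$; so one may choose indices $a$ (for $v_i$) and $b$ (for $v_j$) witnessing this common endpoint, and a quick case analysis over whether $i=0$ or $j=n+1$ shows one can always select $a,b$ with $b-a\ge 2$ — using also $n\ge 2$, which follows from the hypothesis that $e$ and $f$ are non-adjacent in $G$. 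Then $e_a$ and $e_b$ are adjacent in $G'$ but non-consecutive in the $G'$-path, providing a chord and contradicting vertex-minimality.

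The main obstacle, as I see it, is simply the bookkeeping of the extreme cases $i=0$ and $j=n+1$; once one observes that the ``free'' endpoints $v_0$ and $v_{n+1}$ each sit on a single edge of the path while the interior $v_k$'s sit on two, the case analysis collapses uniformly into the single assertion that a repetition forces either a loop or a chord, both forbidden by hypothesis.
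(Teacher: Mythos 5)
Your proposal is correct and follows essentially the same route as the paper: part $(i)$ by direct inspection of adjacency in $G'$, and part $(ii)$ by reading off the common endpoints $v_i$ of consecutive edges and using vertex-minimality to rule out a repetition via the resulting chord $e_ae_b$ (the paper phrases this as the shortcut path $e_0\dots e_i e_{j+1}\dots e_n$ contradicting minimality). Your treatment is in fact slightly more complete, since you explicitly check the extreme vertices $v_0$ and $v_{n+1}$, which the paper's proof leaves implicit.
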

\begin{proof}
	The first item is trivial. In fact, if $v_0v_1v_2\dots v_n$ is a path in $G$, then $v_{i_1}v_i$ and $v_iv_{i+1}$ are distinct and adjacent edges for every $1 < i < n$, defining a path in $G'$.

	Now, suppose that $e_0e_1e_2\dots e_n$ is a minimal path in $G'$ connecting the edges $e = e_0$ and $f = f_n$. Hence, for each $i < n$, $e_i$ and $e_{i+1}$ share an endpoint $v_{i+1}$ in $G$, since those are adjacent edges. Moreover, $v_{i+1}\neq v_{j+1}$ for every $j < n$ distinct from $i$. Otherwise, if $v_{i+1} = v_{j+1}$ and $i < j$ for instance, the edges $e_i$ and $e_{j+1}$ would be adjacent, so that the path $e_0e_1 \dots e_i e_{j+1}\dots e_n$ would contradict the minimality of $e_0e_1e_2\dots e_n$.   
\end{proof}

Besides that, the first item in the above lemma also provides a natural identification of some ends of $\Omega_E(G)$ with elements of $\Omega(G')$. More precisely, given a ray $r = v_0v_1v_2\dots$, we denote by $\phi(r)$ the ray in $G'$ presented by $\phi(r) = \{v_iv_{i+1}\}_{i < \omega}$. We thus consider the induced map  
\begin{equation}\label{phi}
	\begin{array}{cccc}
		\Phi:  &  \Omega_E(G) & \to & \Omega(G')\\
		& [r]_E & \mapsto & [\phi(r)]
	\end{array} 
\end{equation}

It is not difficult to prove that $\Phi$ is a well-defined injection, as it is also claimed by Lemma 1 of \cite{edgeends}. However, one cannot ensure that $\Phi$ is surjective, since infinitely many edges incident to a vertex $v$ of $G$ may define a ray in $G'$ whose end belongs to $\Omega(G')\setminus \Omega_E(G)$. In other words, besides being a natural inclusion of $\Omega_E(G)$ in $\Omega(G')$, the map $\Phi$ might not identify these two spaces. Thus, unlike in finite graph theory (where many results of edge-connectivity are restrictions of vertex-analogous statements to line graphs), the interplay between $G$ and $G'$ needs to be analyzed with additional caution if they contain vertices of infinite degree. Considering that, the edge-analogous of Lemma \ref{l1} reads as follows:

\begin{lemma}\label{discreto}
	For a fixed graph $G$, let $T\subset \hat{E}(G)$ be a discrete subset. Then, there is a family $\{C_{\omega}:\omega \in T\cap \Omega_E(G)\}$ of connected subgraphs of $G$ for which the following properties are verified:
	\begin{itemize}
		\item[$i)$] $C_{\omega_1}\cap C_{\omega_2} = \emptyset$ if $\omega_1 \neq \omega_2$;
		\item[$ii)$] For every $\omega \in T\cap \Omega_E(G)$, the cut $F_{\omega}:=\delta(C_{\omega})$ is finite and $\hat{E}(F_{\omega},\omega)\cap T = \{\omega\}$.
	\end{itemize}
\end{lemma}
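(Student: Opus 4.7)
The plan is to transfer the problem to the line graph $G'$ of $G$ through the injection $\Phi$ of \ref{phi} and then invoke Lemma \ref{l1} there. Set $T^* := \Phi(T\cap \Omega_E(G))\subseteq \Omega(G')$. The first step is to verify that $T^*$ is a discrete subspace of $\hat{V}(G')$. The key observation, which follows from Lemma \ref{41}, is that for any finite $F\subseteq E(G) = V(G')$ and any $\omega \in \Omega_E(G)$, two rays $r, s$ of $G$ have tails in the same component of $G\setminus F$ if and only if $\phi(r), \phi(s)$ have tails in the same component of $G'\setminus F$. Consequently, a finite edge set $F$ witnessing $\hat{E}(F,\omega)\cap T = \{\omega\}$ in $\hat{E}(G)$ also satisfies $\hat{V}(F,\Phi(\omega))\cap T^* = \{\Phi(\omega)\}$ in $\hat{V}(G')$, so $T^*$ is discrete.

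Once discreteness is in place, Lemma \ref{l1} applied to $T^*$ in $G'$ furnishes a subgraph $H$ of $G'$ with finite adhesion such that distinct edge-ends $\omega_1, \omega_2\in T\cap \Omega_E(G)$ have their images $\Phi(\omega_i)$ lying in the closures of distinct components $D_{\omega_i}$ of $G'\setminus H$; moreover, for each $\omega$ every representative ray $r$ of $\omega$ has $\phi(r)$ eventually inside $D_\omega$. I then translate each $D_\omega$ back to $G$ by letting $V_\omega$ be the set of endpoints in $V(G)$ of the edges comprising $D_\omega$, and denoting by $C_\omega^0$ the subgraph of $G$ with vertex set $V_\omega$ and edge set $D_\omega$. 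Lemma \ref{41}(ii) applied to $G'$-paths within $D_\omega$ yields walks in $G$ showing $C_\omega^0$ is connected. An edge $e\in \delta(V_\omega)$ is $G'$-adjacent to some edge of $D_\omega$ but does not itself belong to $D_\omega$, hence lies in $V(H)\cap N_{G'}(D_\omega)$, and finite adhesion of $H$ bounds $|\delta(V_\omega)|$. A shared endpoint between $V_\omega$ and $V_{\omega'}$ for $\omega\neq \omega'$ would produce two $G'$-adjacent edges not in $V(H)$ yet in distinct components of $G'\setminus H$, a contradiction; hence the $V_\omega$'s are pairwise disjoint.

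To complete the argument, I still need $\hat{E}(F_\omega,\omega)\cap T = \{\omega\}$, which amounts to also excluding the vertices of $T\cap V(G)$. Since $T$ is discrete in $\hat{E}(G)$, for each $\omega$ I may fix a finite $F_\omega^*\subseteq E(G)$ with $\hat{E}(F_\omega^*,\omega)\cap T = \{\omega\}$. Then I define $C_\omega$ as the subgraph of $G$ induced by the vertex set $C_E(F_\omega^*\cup \delta(V_\omega),\omega)$, and set $F_\omega := \delta(C_\omega)$. Since $\delta(V_\omega)$ isolates $V_\omega$ from its complement in $G$, this vertex set is contained in $V_\omega \cap C_E(F_\omega^*,\omega)$, so the $C_\omega$'s remain pairwise disjoint and do not meet $T\cap V(G)$. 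The connectedness of $C_\omega$ makes $F_\omega$ a cut with $F_\omega \subseteq F_\omega^*\cup \delta(V_\omega)$ (hence finite), and a routine check gives $C_E(F_\omega,\omega) = V(C_\omega)$, which together with the previous items yields $\hat{E}(F_\omega,\omega)\cap T = \{\omega\}$.

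The main obstacle is coordinating the edge-end neighborhoods of $\Omega_E(G)$ with the vertex-end neighborhoods of $\Omega(G')$. Because $\Phi$ need not be surjective when $G$ contains vertices of infinite degree, one must work with $T^*$ rather than appeal directly to Lemma \ref{l1} for the full space $\Omega(G')$; and because the vertices of $T\cap V(G)$ have no natural counterpart in $G'$, the separate refinement using $F_\omega^*$ is unavoidable.
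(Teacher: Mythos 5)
Your argument is correct and follows the paper's main route: transfer the problem to the line graph $G'$ via the map $\Phi$ of (\ref{phi}), verify that the image of $T$ is discrete in $\hat{V}(G')$, apply Lemma \ref{l1} there, and pull the resulting components back to $G$ using Lemma \ref{41}. The one genuine divergence is how the vertices of $T\cap V(G)$ get excluded from the neighbourhoods $\hat{E}(F_\omega,\omega)$. The paper enlarges the discrete set handed to Lemma \ref{l1} to $T' = \Phi(T\cap\Omega_E(G))\cup\bigcup_{v\in T\cap V(G)}V(K_v)$, where $K_v=\delta(\{v\})$ is the clique of edges at $v$; since Lemma \ref{l1} forces the subgraph $H$ to contain $T'\cap V(G')$, the components $K_\omega$ of $G'\setminus H$ automatically avoid every edge incident to a $T$-vertex, and a final path argument via Lemma \ref{41} shows those vertices cannot lie in $C_E(F_\omega,\omega)$. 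You instead feed only $T^*=\Phi(T\cap\Omega_E(G))$ to Lemma \ref{l1} and afterwards shrink each pulled-back region by intersecting with $C_E(F_\omega^*,\omega)$ for a discreteness witness $F_\omega^*$ of $T$ at $\omega$. Your variant spares you the (mildly delicate) verification that no end $\Phi(\omega)$ lies in the closure of the cliques $\bigcup_v V(K_v)$, at the cost of an extra refinement step; the containment $C_E(F_\omega^*\cup\delta(V_\omega),\omega)\subseteq V_\omega\cap C_E(F_\omega^*,\omega)$ and the identity $C_E(F_\omega,\omega)=V(C_\omega)$ that you call routine do check out, and they deliver disjointness, connectivity, finiteness of the cut $F_\omega\subseteq F_\omega^*\cup\delta(V_\omega)$, and the isolation property $\hat{E}(F_\omega,\omega)\cap T=\{\omega\}$ (the end part following because any $\eta\in T\setminus\{\omega\}$ with a tail in $V(C_\omega)\subseteq C_E(F_\omega^*,\omega)$ would violate $\Omega_E(F_\omega^*,\omega)\cap T=\{\omega\}$). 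So the proof is sound and marginally more modular than the paper's.
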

\begin{proof}
	For every $v\in T\cap V(G)$, let $K_v = \delta(\{v\})$ denote the set of edges in $G$ which have $v$ as an endpoint. Therefore, $K_v$ is a vertex set that induces a clique in the line graph $G'$. We will then show that the set $T' =\displaystyle  \Phi(T\cap \Omega_E(G))\cup \bigcup_{v \in T\cap V(G)}K_v$ is also discrete in $\hat{V}(G')$, where $\Phi$ is the map given by (\ref{phi}). Since vertices are isolated points in this topological space, it is sufficient to conclude that there is no edge-end $[r]_E\in T\cap \Omega_E(G)$ such that $[\phi(r)]\in \overline{T'\setminus \{[\phi(r)]\}}$. In fact, for every edge-end $[r]_E \in T\cap \Omega_E(G)$, there is a finite set $F\subset E(G)$ such that $\hat{E}(F,[r]_E)\cap T = \{[r]_E\}$, because $T$ is a discrete subspace of $\hat{E}(G)$. Regarding $F$ as a finite vertex subset of $G'$, we must have $\hat{V}(F,[\phi(r)])\cap T' = \{[\phi(r)]\}$. Otherwise, one of the following items is verified:
	\begin{itemize}
		\item If there is $e \in \hat{V}(F,[\phi(r)])\cap T'\cap V(G')$, then $e$ is an edge in $G$ which is incident to some vertex $v\in T$. In this case, by Lemma \ref{41}, any vertex-minimal path in $G'\setminus F$ connecting $e$ to a tail of $\phi(r)$ would describe a path in $G$ connecting $v$ to a tail of $r$, but avoiding the edges in $F$. This contradicts the fact that $v\notin \hat{E}(F,[r]_E)\cap T$;
		\item If there is an end $\omega \in \hat{V}(F,[\phi(r)])\cap T'\cap \Omega(G')$, then $\omega = [\phi(s)]$ for some ray $s$ in $G$ such that $[s]_E\in T$. In this case, again by Lemma \ref{41}, any vertex-minimal path in $G'\setminus F$ connecting the tails of $\phi(r)$ and $\phi(s)$ would describe a path in $G$ connecting the tails of $r$ and $s$, but avoiding the edges of $F$. Since $\hat{E}(F,[r]_E)\cap T = \{[r]_E\}$, we then must have $[r]_E = [s]_E$. Therefore, $\omega = [\phi(r)]$. 
	\end{itemize}
	Hence, $T'$ is indeed a discrete subspace of $\hat{V}(G')$. Then, let $H$ be the subgraph of $G'$ claimed by Lemma \ref{l1} when considering $X = T'$. Therefore, for each edge-end $\omega \in T\cap \Omega_E(G)$ there is a connected component $K_{\omega}$ of $G'\setminus H$ such that $\Phi(\omega)\in \overline{K_{\omega}}$. Observing that $V(K_{\omega})\subset V(G') = E(G)$, we define $C_{\omega}$ as the subgraph of $G$ induced by the vertices which are endpoints of some edge from $V(K_{\omega})$. In particular, $C_{\omega}$ is connected by Lemma \ref{41} and must contain all but finitely many edges of any ray $r$ of $G$ such that $\omega = [r]_E$, since a tail of $\phi(r)$ in $G'$ is contained in $K_{\omega}$ in this case.

	We now note that $C_{\omega_1}\cap C_{\omega_2} = \emptyset$ if $\omega_1 \neq \omega_2$. In fact, if there were a vertex $v \in C_{\omega_1}\cap C_{\omega_2}$, then $v$ would be a common endpoint of some given edges $e \in V(K_{\omega_1})$ and $f \in V(K_{\omega_2})$. This means that $e$ and $f$ would be adjacent edges, belonging (as vertices) to the same connected component $K_{\omega_1} = K_{\omega_2}$ of $G'\setminus H$. However, this would contradict the injectivity of $\Phi$, once Lemma \ref{l1} asserts that $K_{\omega_1} = K_{\omega_2}$ if, and only if, $\Phi(\omega_1) = \Phi(\omega_2)$.

	Finally, for a fixed $\omega \in T\cap \Omega_E(G)$, we observe that the following equivalences hold for any edge $e\in E(G)$:
	
	\begin{align*}
		e \in \delta(C_{\omega}) & \iff e=xy \text{ for some }x\in C_{\omega}\text{ and some }y\in V(G)\setminus C_{\omega} \\
		& \iff e=xy\text{ for some endpoint }x\text{ of an edge }f\in V(K_{\omega})\text{ and some }y \notin V(C_{\omega}) \\
		& \iff e \text{ is adjacent to some }f\in V(K_{\omega})\text{ but do not belong to }K_{\omega}\\
		& \iff e \text{ belongs to }N(K_{\omega}) :=\{h \in H : h\text{ has a neighbor in }K_{\omega}\} \text{ as a vertex of }G'.
	\end{align*}

	In other words, we just verified the equality $\delta(C_{\omega}) = N(K_{\omega})$. From the finite adhesion of $H$, it follows that $F_{\omega}:=\delta(C_{\omega})$ is finite. Hence, it remains to prove that $\hat{E}(F_{\omega},\omega)\cap T \cap V(G) = \emptyset$, once we already concluded that $\omega \in \overline{C_{\omega}} \subset \hat{E}(F_{\omega},\omega)$ and that $C_{\omega}\cap C_{\eta} = \emptyset$ if $\eta \in \Omega_E(G) \cap T\setminus\{\omega\}$. For instance, thus, suppose that there is a vertex $v \in \hat{E}(F_{\omega},\omega)\cap T \cap V(G)$. Then, there is a path $P$ in $C_{\omega}$ connecting $v$ to a tail of a ray $r$ such that $\omega = [r]_E$. In this case, Lemma \ref{41} claims that the edges of $P$ define a path in $G'$ which connects an element from $K_v = \delta(\{v\})$ to a tail of $\phi(r)$. However, contradicting the fact that $E(P)\subset E(C_{\omega})$, this path must intersect $N(K_{\omega}) = \delta(C_{\omega})$, because $K_v\subseteq T'$ and $[\phi(r)] = \Phi(\omega) \in \overline{K_{\omega}}$. 
\end{proof}

Revisiting the previous section, we observe that Lemma \ref{discreto} played a central role in the proof of Theorem \ref{t1}. In addition, from the first paragraphs of the above proof we can extract the following conclusion: if $[\phi(r)]\in \overline{\Phi(T)}$ for some edge-end $[r]_E\in \Omega_E(G)$ and some set $T\subset \Omega_E(G)$, then $[r]_E \in \overline{T}$. In other words, the function $\Phi$ as defined in (\ref{phi}) is an open map over its range. Although simple, this remark is useful for translating Lemma \ref{l2} to its edge-related version:

\begin{lemma}\label{aresta-separador}
	Let $A,B\subset \Omega_E(G)$ be two sets of edge-ends in a graph $G$ which are topologically separated, i.e., such that $\overline{A}\cap B = A\cap \overline{B} = \emptyset$. Then, there is an edge set $H\subset E(G)$ with the following properties:
	\begin{itemize}
		\item[$i)$] For every edge-end $\omega \in  A \cup B$, there is a connected component of $G\setminus H$ which assumes the form $C_E(F_{\omega},\omega)$ for some finite edge set $F_{\omega}\subseteq E(G)$;
		\item[$ii)$] For given rays $r$ and $s$ in $G$ such that $[r]_E\in A$ and $[s]_E \in B$, their tails are contained in distinct connected components of $G\setminus H$.  
	\end{itemize}
\end{lemma}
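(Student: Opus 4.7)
The plan is to reduce the claim to Lemma \ref{l2} by lifting the edge-separation problem in $G$ to a vertex-separation problem in the line graph $G'$, through the injection $\Phi:\Omega_E(G)\to\Omega(G')$ defined in (\ref{phi}). The first step is to transfer the topological separation from $(A,B)$ to $(\Phi(A),\Phi(B))$. The observation extracted from the proof of Lemma \ref{discreto} — namely, that $\Phi^{-1}(\overline{\Phi(T)})\subseteq\overline{T}$ for every $T\subseteq\Omega_E(G)$ — together with the injectivity of $\Phi$, turns the hypothesis $\overline{A}\cap B=A\cap\overline{B}=\emptyset$ into $\overline{\Phi(A)}\cap\Phi(B)=\Phi(A)\cap\overline{\Phi(B)}=\emptyset$ inside $\hat{V}(G')$. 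I would then apply Lemma \ref{l2} to the pair $\Phi(A),\Phi(B)\subseteq\Omega(G')$, obtaining a subgraph $H'$ of $G'$ with $\overline{V(H')}\cap(\Phi(A)\cup\Phi(B))=\emptyset$ and separating any two rays of $G'$ whose ends lie on opposite sides. The candidate edge set for the conclusion is then $H:=V(H')\subseteq V(G')=E(G)$.

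For item (i), the strategy is to unfold the condition $\Phi(\omega)\notin\overline{V(H')}$ at each $\omega\in A\cup B$ into a finite $S\subseteq V(G')=E(G)$ with $C(S,\Phi(\omega))\cap V(H')=\emptyset$, and then set $F:=S$. Supposing for a contradiction that some $e\in H$ lies inside $C_E(F,\omega)$, a fixed representative $r$ of $\omega$ admits a sufficiently late tail in $C_E(F,\omega)$; choose a finite path in $G\setminus F$ from an endpoint of $e$ to a vertex of that tail, and concatenate $e$, this path, and the tail of $r$ to obtain an infinite walk in $G$ whose edges all avoid $F$. Interpreting consecutive edges of this walk as adjacent vertices of $G'$ via Lemma \ref{41}(i), I would obtain a walk in $G'\setminus S$ from $e$ to a tail of $\phi(r)$, giving $e\in C(S,\Phi(\omega))\cap V(H')$ and contradicting the choice of $S$.

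For item (ii), given rays $r$ and $s$ with $[r]_E\in A$ and $[s]_E\in B$, item (i) guarantees the existence of tails of $r$ and $s$ that avoid $H$ as subgraphs of $G$. If those tails lay in the same connected component of $G\setminus H$, a connecting path in $G\setminus H$ concatenated with the two tails would give, again through Lemma \ref{41}(i), a walk in $G'\setminus V(H')$ joining a tail of $\phi(r)$ to a tail of $\phi(s)$. This would place the ends $\Phi([r]_E)$ and $\Phi([s]_E)$ in the same component of $G'\setminus H'$, contradicting the separation property supplied by Lemma \ref{l2}.

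The main delicate point I expect is the bookkeeping inside item (i): one must check that the walk constructed in $G'$ really avoids $V(H')$ as vertices — which means verifying that every edge used in $G$ is outside $H$ (the edge $e$ itself is in $H$ but does not lie in $S=F$, while the rest of the walk stays inside $C_E(F,\omega)$ which has no edge from $H$ by assumption on the component, or inside a tail of $r$ that avoids $F$) — and that what results is genuine membership in the basic open neighborhood $C(S,\Phi(\omega))$ rather than mere closeness. The openness and injectivity arguments needed to transfer the topological separation to $G'$, as well as the translation in item (ii), are more routine.
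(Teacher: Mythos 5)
Your proposal is correct and follows essentially the same route as the paper: transfer the topological separation to $\Phi(A),\Phi(B)$ in the line graph, apply Lemma \ref{l2} there, take $H=V(H')$, and translate paths back and forth via Lemma \ref{41}. The only (harmless) variation is in item $(i)$, where you extract the finite set $F$ from the condition $\Phi(\omega)\notin\overline{V(H')}$, whereas the paper takes $F_\omega=N(K_\omega)$ using the finite adhesion of $H'$; both properties are supplied by Lemma \ref{l2} and the surrounding argument is the same.
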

\begin{proof}
	Let $\Phi$ be the function defined as in (\ref{phi}), which is an open map over its range. Therefore, $\overline{\Phi(A)}\cap \Phi(B) = \Phi(A)\cap \overline{\Phi(B)} = \emptyset$ by the main hypothesis over the sets $A$ and $B$. If $G'$ denotes the line graph of $G$, let $H$ be its subgraph of finite adhesion claimed by Lemma \ref{l2}. Then, $\overline{H}\cap (\Phi(A)\cup \Phi(B)) = \emptyset$ and, given two rays $r$ and $s$ in $G$ such that $[r]_E \in A$ and $[s]_E \in B$, the tails of $\phi(r)$ and $\phi(s)$ in are contained in distinct connected components of $G'\setminus H$. 
	
	In particular, for an edge-end $\omega \in A\cup B$ and a representative ray $r\in \omega$, the connected component $K_{\omega}$ of $G'\setminus H$ containing a tail of $\phi(r)$ has finite neighborhood. In other words, the set $N(K_{\omega}) = \{h \in H : h \text{ has a neighbor in }K_{\omega}\}$ is finite. Nevertheless, we can see $F_{\omega}:=N(K_{\omega})$ as a finite edge set in $G$, once $V(H)\subset E(G)$ by the definition of line graph. In addition, there is no edge $h\in V(H)$ with both endpoints in $C_E(F_{\omega},\omega)$: otherwise, there would exist in this subgraph of $G$ a path $P$ connecting a tail of $r$ to an endpoint of $h$ (which is an edge in $G$). However, this would imply by Lemma \ref{41} in the existence of a path $Q$ in $G'\setminus N(K_{\omega})$ connecting a tail of $\phi(r)$ to $h\in V(H)$, contradicting the fact that $K_{\omega}$ is the connected component of $G'\setminus H$ in which $\phi(r)$ has its tail. In particular, we just proved that $C_E(F_{\omega},\omega)$ is a connected subgraph of $G\setminus H$. Actually, observing that $F_{\omega}\subseteq H$ by definition, $C_{E}(F_{\omega},\omega)$ is indeed a connected component of $G\setminus H$.  Hence, the edge set $V(H)\subset E(G)$ verifies the first property required by the statement.

	Therefore, we shall finish the proof by arguing that $V(H)$ fulfills the property $ii)$ as well. To this aim, fix two rays $r$ and $s$ in $G$ such that $[r]_E \in A$ and $[s]_E \in B$. Suppose for a contradiction that their tails are contained in a same connected component of $G\setminus V(H)$, meaning that there is a path in $G$ connecting these tails but avoiding the edges from $V(H)$. However, by Lemma \ref{41}, this would define a path in $G'\setminus H$ which connects the rays $\phi(r)$ and $\phi(s)$, contradicting the second item of Lemma \ref{l2} for the choice of $H$.
\end{proof}

In its turn, the two properties of the edge set $H$ claimed by Lemma \ref{aresta-separador} inspire an edge-related version of Definition \ref{separator}. More precisely, under the assumption that $A,B\subset \Omega_E(G)$ are topologically separated as in the above result, we just verified that the combinatorial structure below indeed exists within the graph $G$:

\begin{definition}[Separators - edge version]\label{eseparator}
	Fix a graph $G$ and two end subsets $A,B\subseteq \Omega_E(G)$. Then, we call $S\subset E(G)$ an $A-B$ \textbf{separator} if there is no graphic $A-B$ path in $G\setminus S$ and, for every edge-end $\omega \in A\cup B$, there exists a finite set $F\subset E(G)$ such that no edge from $S$ has both its endpoints in $C_E(\omega, F)$. In particular, every edge-end from $A\cup B$ has a representative ray in $G\setminus S$, but this subgraph contains no $A-B$ path as in Definition \ref{epaths}.  
\end{definition}

Therefore, the settings of Theorem \ref{t2} were finally fully discussed: for given sets $A,B\subset \Omega_E(G)$, the $A-B$ paths are those introduced by Definition \ref{epaths}, while the $A-B$ separators are the ones as in Definition \ref{eseparator}. Moreover, Corollary \ref{finiteseparation} already established this Menger-type result precisely when $A$ and $B$ can be separated by finitely many edges. The general case, then, is covered by the proof below:

\begin{reptheorem}{t2}[Erd\H{o}s-Menger Theorem for edge-ends]\label{edge-ends}
	\textit{Let $A,B \subset \Omega_E(G)$ be two set of ends such that $\overline{A}\cap B = A\cap \overline{B} = \emptyset$. Then, there is a family $\mathcal{P}$ of edge-disjoint $A-B$ paths and an $A-B$ separator $S\subseteq E(G)$ which lies on it. }
\end{reptheorem} 
\begin{proof}
	Since $\overline{A}\cap B = \emptyset =A\cap \overline{B}$, fix an edge set $H'\subseteq E(G)$ satisfying properties $i)$ and $ii)$ of Lemma \ref{aresta-separador}. Then, define the family $\mathcal{C}' = \{C': C'\text{ is a connected component of }G\setminus H' \text{ such that }\overline{C'}\cap (A\cup B) \neq \emptyset\}$. By property $i)$, the cut $\delta(C')$ is finite for every $C'\in \mathcal{C}'$. In particular, $\delta(C')$ is a finite edge set that separates $N(C')$ from $\overline{C}'\cap (A\cup B)$ in the graph induced by $V(C')\cup N(C')$. Hence, by Lemma \ref{cuts}, there is a vertex set $X_{C'}\subseteq V(C')$ such that $\delta(X_{C'})$ is a sizewise minimum finite cut which separates $N(C')$ and $\overline{C}'$ in $G[V(C')\cup N(C')]$. Moreover, $X_{C'}$ can be chosen so that every connected component of $G[X_{C'}]$ contains a representative ray of some edge-end in $A\cup B$. Therefore, after writing $H_{C'}$ for the edge set of the graph induced by $V(C')\setminus X_{C'}$, the edge set $H:=H'\cup \displaystyle \bigcup_{C'\in\mathcal{C}'}(H_{C'}\cup \delta(X_{C'}))$ also satisfies properties $i)$ and $ii)$ of Lemma \ref{aresta-separador}. Besides that, the following additional assertion now holds:
	
	\begin{center}
		\texttt{Claim:} Let $C$ be a connected component of $G\setminus H$ such that $\overline{C}\cap (A\cup B)\neq \emptyset$. Then, $\delta(C)$ is a sizewise minimum cut that separates $N(C)$ and $\overline{C}\cap (A\cup B)$ in $G$.
	\end{center} 
	\begin{proof}[Proof of the claim]
		Suppose that there is a cut $F_C\subseteq E(C)\cup \delta(C)$ which separates $N(C)$ and $\overline{C}\cap (A\cup B)$ but such that $|F_C|<|\delta(C)|$. Then, let $C'$ be the connected component of $G\setminus H'$ which contains $C$ as a subgraph. By definition of $H$, note that $C$ is a connected component of $G[X_{C'}]$. In this case, $(\delta(X_{C'})\setminus \delta(C))\cup F_C$ is an edge set that separates $N(C')$ from $\overline{C'}\cap (A\cup B)$, but it has strictly fewer than $|\delta(X_{C'})|$ many edges. This contradicts the choice of $X_{C'}$, concluding that $|F_C| = |\delta(C)|$. 
	\end{proof}

	Now, define $\mathcal{C} = \{C: C\text{ is a connected component of }G\setminus H \text{ such that }\overline{C}\cap (A\cup B) \neq \emptyset\}$. Then, for every $C\in\mathcal{C}$, Lemma \ref{vertex-end} ensures the existence of a family $\mathcal{P}_C$ of $|\delta(C)|$ many edge-disjoint $N(C)-(\overline{C}\cap (A\cup B))$ paths in the graph induced by $V(C)\cup N(C)$. On the other hand, let $\tilde{H}$ denote the subgraph of $G$ comprising the edges from $H \cup \displaystyle \bigcup_{C\in\mathcal{C}}\delta(C)$ and from the connected components of $G\setminus H$ which have no representatives of edge-ends in $A\cup B$. Thus, we fix the sets $\tilde{A} = \{v \in \tilde{H}: v \in V(C)\text{ for some }C\in\mathcal{C}\text{ such that }\overline{C}\cap A \neq \emptyset\}$ and $\tilde{B} = \{v \in \tilde{H}: v \in V(C)\text{ for some }C\in\mathcal{C}\text{ such that }\overline{C}\cap B \neq \emptyset\}$, which are disjoint by property $ii)$ of Lemma \ref{aresta-separador}.

	Therefore, by the Erd\H{o}s-Menger Theorem for edges (Corollary \ref{edgesmenger}), there exist in $\tilde{H}$ a family $\mathcal{P}$ of edge-disjoint $\tilde{A}-\tilde{B}$ paths and a cut $S:=\delta(X)$ lying on it. Moreover, we choose $X\subseteq V(H)$ so that $\tilde{A}\subseteq X$ and $\tilde{B}\subseteq V(G)\setminus X$. When writing a given path $P\in \mathcal{P}$ as $P = v_0v_1\dots v_{n-1}v_{n}$, we thus can assume that $v_0\in \tilde{A}$ and $v_n\in \tilde{B}$. By definition of $\tilde{A}$ and $\tilde{B}$, there are connected components $C$ and $D$ of $G\setminus H$ such that $v_0\in V(C)$, $\overline{C}\cap A \neq \emptyset$, $v_n \in V(D)$ and $\overline{D}\cap B\neq \emptyset$. Then, the edges $v_0v_1 \in \delta(C_0)$ and $v_{n-1}v_n \in \delta(C_n)$ belong to certain (possibly infinite) paths $Q\in \mathcal{P}_{C}$ and $Q'\in \mathcal{P}_{D}$, respectively. Note that $Q$ and $Q'$ are uniquely determined as paths containing $v_0v_1$ and $v_{n-1}v_n$ respectively, since the families $\mathcal{P}_C$ and $\mathcal{P}_D$ comprise pairwise edge-disjoint (possibly infinite) paths. By concatenating $Q$, $P$ and $Q'$ accordingly, we then uniquely describe an $A-B$ path $\tilde{P}$. Regarding this construction, which is similar to the one in the last paragraph within the proof of Theorem \ref{t1}, $\tilde{\mathcal{P}} := \{\tilde{P}:P\in\mathcal{P}\}$ is a well-defined family of edge-disjoint $A-B$ paths that contains the cut $S$ on it. By properties $i)$ and $ii)$ of Lemma \ref{aresta-separador}, that are satisfied by $H$, every $A-B$ path in $G$ must contain the edges of an $\tilde{A}-\tilde{B}$ path in $\tilde{H}$ and, hence, must intersect the $\tilde{A}-\tilde{B}$ separator $S$. Observing that $S\subseteq \tilde{H}$, this establishes $S$ also as an $A-B$ separator and finishes the proof.          
\end{proof}

\begin{corollary}\label{antigot2}
	Let $A,B\subseteq \Omega_E(G)$ be two sets of edge-ends such that $\overline{A}\cap B = \emptyset = A\cap \overline{B}$. Fix a sizewise maximum family $\mathcal{P}$ of edge-disjoint $A-B$ paths and let $S\subseteq E(G)$ be an $A-B$ separator of minimum size. Then, $|\mathcal{P}| = |S|$. 
\end{corollary}
\begin{proof}
	Let $\mathcal{P}$ be the family claimed by the previous theorem and $S$ be the corresponding $A-B$ separator. Once $S$ lies on $\mathcal{P}$, we clearly have $|S| = |\mathcal{P}|$. In particular, $S$ is a sizewise maximum $A-B$ separator: after all, if $S'\subseteq E(G)$ has fewer than $|S|$ many edges, then there is $P\in\mathcal{P}$ such that $E(P)\cap S'=\emptyset$. On the other hand, since $S$ must meet every $A-B$ path, it follows that $\mathcal{P}$ has maximum size as a family of edge-disjoint $A-B$ paths. 
\end{proof}

In particular, the above corollary translates Theorem \ref{mengerpolat} verbatim to an edge-related context. Despite that, it lacks the structural property highlighted by Theorem \ref{t2}, which is more general and had no explicit vertex-analogous counterpart in the literature regarding end spaces. In fact, its proof here is supported by the Erd\H{o}s-Menger Theorem (for edges), a result due to Aharoni and Berger in \cite{erdosmenger} that was not available at the time Polat published his Theorem \ref{mengerpolat} in \cite{polat}.    

\section{Acknowledgments}

We thank the support of Fundação de Amparo à Pesquisa do Estado de São Paulo (FAPESP). The first and the second named authors were sponsored through grant numbers 2023/00595-6 and 2021/13373-6 respectively. We also acknowledge the referees by the careful analysis of this work, whose corresponding reports led to many improvements throughout the paper. In particular, we thank to the reviewer that asked for a generalization of Corollary \ref{antigot2}, which was written as one of our main results in a previous version. Obtained from this instigation, Theorem \ref{t2} now fits as a more structural statement regarding edge-connectivity between edge-ends in infinite graphs.

\bibliography{EdgeConnectivityBetweenEdgeEnds}
\bibliographystyle{plain}

\end{document}